\documentclass[11pt,a4paper]{article}

\usepackage[T1]{fontenc}
\usepackage[utf8]{inputenc}
\usepackage[a4paper,margin=1in]{geometry}
\usepackage{amsmath,amssymb,amsfonts,amsthm}
\usepackage{graphicx}
\usepackage{tikz}
\usepackage{float}
\usepackage{capt-of}
\usepackage{comment}
\usepackage[numbers,sort&compress]{natbib}
\usepackage[colorlinks=true,linkcolor=blue,citecolor=blue,urlcolor=blue]{hyperref}
\usepackage[nameinlink,capitalise,noabbrev]{cleveref}
\hypersetup{
  hypertexnames=false,
  pdftitle={Linear Turan Numbers of Uniform Hypertrees},
  pdfauthor={Rajat Adak and Pragya Verma},
  pdfsubject={Extremal combinatorics and linear uniform hypergraphs},
  pdfkeywords={linear Turan number, uniform linear hypergraph, hypertree,
    linear path, cograph, crown, Steiner system}
}
\usetikzlibrary{calc,positioning,quotes}

\newtheorem{theorem}{Theorem}[section]
\newtheorem{proposition}[theorem]{Proposition}
\newtheorem{lemma}[theorem]{Lemma}
\newtheorem{corollary}[theorem]{Corollary}
\newtheorem{claim}[theorem]{Claim}

\theoremstyle{definition}

\theoremstyle{remark}

\crefname{theorem}{Theorem}{Theorems}
\crefname{proposition}{Proposition}{Propositions}
\crefname{lemma}{Lemma}{Lemmas}
\crefname{corollary}{Corollary}{Corollaries}
\crefname{claim}{Claim}{Claims}
\crefname{clm}{Claim}{Claims}
\crefname{cl}{Claim}{Claims}
\crefname{clam}{Claim}{Claims}
\crefname{definition}{Definition}{Definitions}
\crefname{remark}{Remark}{Remarks}
\crefname{case}{Case}{Cases}

\begin{document}
\title{Linear Tur\'an Numbers of Uniform Hypertrees}
\author{Rajat Adak\thanks{Corresponding author: \texttt{rajatadak@iisc.ac.in}}
\quad Pragya Verma\thanks{\texttt{pragyaverma@iisc.ac.in}}\\[0.5em]
\small Department of Computer Science and Automation, Indian Institute of Science\\
\small Bengaluru, India}
\date{}

\maketitle

\begin{abstract}
A hypergraph \(H\) is said to be \emph{linear} if every pair of
vertices is contained in at most one hyperedge. Given a family
\(\mathcal{F}\) of \(r\)-uniform hypergraphs, an \(r\)-uniform
hypergraph \(H\) is said to be \(\mathcal{F}\)-free if it contains no
member of \(\mathcal{F}\) as a subhypergraph. The linear Tur\'an
number \(\operatorname{ex}^{\mathrm{lin}}_r(n,\mathcal{F})\) denotes
the maximum number of hyperedges in an \(n\)-vertex
\(\mathcal{F}\)-free linear \(r\)-uniform hypergraph.

Gy\'arf\'as, Ruszink\'o, and S\'ark\"ozy
[\emph{Linear Tur\'an numbers of acyclic triple systems},
European Journal of Combinatorics 99 (2022), 103435]
initiated the systematic study of linear Tur\'an numbers of acyclic
\(3\)-uniform linear hypergraphs. In this paper, we extend this
direction to higher uniformity.

We first determine the linear Tur\'an number of an \(r\)-uniform
linear star \(S_k^r\), proving that
$$\operatorname{ex}^{\mathrm{lin}}_r(n,S_k^r)
\leq n(k-1)/r,$$ with equality precisely for
\((k-1)\)-regular linear \(r\)-uniform hypergraphs, whenever such
hypergraphs exist. We also give a construction showing that, under
suitable divisibility and existential assumptions,
$$\operatorname{ex}^{\mathrm{lin}}_r(n,T_k^r)
\geq n(k-1)/r$$ for every linear \(r\)-uniform hypertree \(T_k^r\)
with \(k\) hyperedges.

We next study linear hypertrees with four hyperedges. For the broom
\(B_4^r\), obtained from \(S_3^r\) by appending a hyperedge incident
with a degree-one vertex, we prove
$$\operatorname{ex}^{\mathrm{lin}}_r(n,B_4^r)
\leq (r+1)n/r$$ and characterize the extremal hypergraphs as disjoint
unions of Steiner systems \(S(2,r,r^2)\), whenever such systems
exist.

For the crown \(E_4^r\), we establish a degree-sensitive upper bound.
Let
\(A=\{v\in V(H):d(v)\geq2r\}\),
\(B=\{v\in V(H):d(v)\leq r\}\), and
\(C=\{v\in V(H):r+1\leq d(v)\leq2r-1\}\).
We prove that
$$\lvert E(H)\rvert\leq
\frac{r}{r-1}\lvert B\rvert+
\frac{2r-1}{r}\lvert C\rvert.$$
As an immediate consequence, we get
\(\operatorname{ex}^{\mathrm{lin}}_r(n,E_4^r)
\leq(2r-1)n/r\). 

We also give a lower-bound construction leaving a constant-factor gap.

For the linear path \(P_4^r\), we settle the extremal problem in
all uniformities. We prove that every \(n\)-vertex linear
\(r\)-uniform \(P_4^r\)-free hypergraph satisfies
$$
\lvert E(H)\rvert\leq \frac{(r+1)n}{r},
$$
and characterize equality: the extremal hypergraphs are precisely the
disjoint unions of Steiner systems \(S(2,r,r^2)\).
We also exhibit counterexamples to a key structural claim used in a
previously proposed proof of the \(4\)-uniform case.
\end{abstract}

\medskip
\noindent\textbf{Keywords:}
Linear Tur\'an number; uniform linear hypergraph; hypertree; linear path;
cograph; crown; Steiner system.

\section{Introduction}
\label{sec:introduction}

Extremal graph theory is largely concerned with Tur\'an-type problems,
which ask for the maximum number of edges in a graph or hypergraph that
avoids a prescribed forbidden substructure. In this work, we focus on
the study of linear Tur\'an numbers for uniform hypergraphs. The
Tur\'an problem for hypergraphs has been investigated extensively; see,
for instance, the surveys
\cite{furedi1991turan,furedi2013history,keevash2011hypergraph}
and the book~\cite{gerbner2018extremal}.

The study of Tur\'an numbers has a long history, beginning with the
classical result of Tur\'an for cliques. Let
\(\operatorname{ex}(n,K_{r+1})\) denote the maximum number of edges in
an \(n\)-vertex graph that does not contain a copy of \(K_{r+1}\) as a
subgraph. The Tur\'an graph, denoted by \(T(n,r)\), is the complete
\(r\)-partite graph on \(n\) vertices whose partite sets are as nearly
equal in cardinality as possible.

\begin{theorem}\label{th:Turan}
\emph{(Tur\'an~\cite{turan1941egy})}
For \(n\geq r\geq 1\),
$$
\operatorname{ex}(n,K_{r+1})
\leq |E(T(n,r))|.
$$
Equality holds if and only if the graph is isomorphic to
\(T(n,r)\).
\end{theorem}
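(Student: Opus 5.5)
We argue by induction on \(n\), using the standard deletion argument, and then settle uniqueness by a separate structural argument. Fix \(r\geq 1\) and let \(G\) be a \(K_{r+1}\)-free graph on \(n\) vertices with the maximum number of edges. If \(n\leq r\), then \(T(n,r)=K_n\) and the claim is trivial, since \(K_n\) contains no \(K_{r+1}\). So assume \(n>r\). Because \(G\) is edge-maximal, adding any missing edge creates a \(K_{r+1}\), so \(G\) contains a copy \(K\) of \(K_r\) (for \(r\geq 2\); the case \(r=1\) is the empty graph and is immediate).

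\textbf{Counting step.} Delete the vertices of \(K\). Each vertex outside \(K\) is adjacent to at most \(r-1\) vertices of \(K\), since otherwise it would complete a \(K_{r+1}\). The graph \(G-V(K)\) is \(K_{r+1}\)-free on \(n-r\) vertices, so induction gives
\[
|E(G)|\leq \binom r2+(r-1)(n-r)+|E(T(n-r,r))|.
\]
Now \(T(n,r)\) is obtained from \(T(n-r,r)\) by adding one vertex to each part. This adds exactly \(\binom r2+(r-1)(n-r)\) edges. Hence the right-hand side equals \(|E(T(n,r))|\), which proves the bound.

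\textbf{Uniqueness: forcing equality.} If \(|E(G)|=|E(T(n,r))|\), then every inequality above is tight:
\begin{itemize}
\item \(G-V(K)\cong T(n-r,r)\) by the inductive equality case;
\item every outside vertex has exactly \(r-1\) neighbours in \(K\), so it misses exactly one vertex of \(K\).
\end{itemize}

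\textbf{Uniqueness: building the partition.} For each \(x_i\in V(K)\), let \(W_i\) be the set of outside vertices missing \(x_i\). Let \(P_1,\dots,P_r\) be the parts of \(T(n-r,r)\). The goal is to show that, after relabelling, \(W_i=P_i\). Then \(P_i\cup\{x_i\}\) are the parts of a complete \(r\)-partite graph on \(n\) vertices with balanced parts, i.e., \(T(n,r)\).

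Two facts drive this:
\begin{itemize}
\item If \(u,v\) lie in different parts \(P_j\ne P_k\) (so \(uv\in E\)) but \(u,v\in W_i\), then we get a \(K_{r+1}\). Take \(u,v\), the \(r-2\) vertices of \(K\) other than \(x_i\) that both see, and a suitable further vertex. More carefully, pick one vertex from each of the other \(r-2\) parts adjacent to all of them. This is the delicate point.
\item The simpler route is this: if \(u\in P_j\cap W_i\) and \(v\in P_k\cap W_i\) with \(j\neq k\), then \(\{u,v\}\cup (V(K)\setminus\{x_i\})\) is a clique on \(r\) vertices. Adding \(x\)-free structure is not immediate, so instead I would use edge-maximality directly.
\end{itemize}

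\textbf{Alternative I would actually use.} The cleaner way to get uniqueness is Zykov symmetrization. In an extremal \(K_{r+1}\)-free graph, non-adjacency is an equivalence relation. If \(u\not\sim v\) and \(v\not\sim w\) but \(u\sim w\), compare the degree of \(v\) with \(d(u)+d(w)\). Either replace \(u,w\) by copies of \(v\), or replace \(v\) by a copy of \(u\). Cloning a vertex never creates \(K_{r+1}\). One of these operations strictly increases the edge count, because the edge \(uw\) is counted in both \(d(u)\) and \(d(w)\). This contradicts maximality.

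Hence \(G\) is complete multipartite, with at most \(r\) parts since it is \(K_{r+1}\)-free. Moving a vertex from a larger part to a part at least \(2\) smaller strictly increases the edge count, so the parts are balanced. Therefore \(G\cong T(n,r)\), and this gives both the bound and the equality characterization.

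The main obstacle is the uniqueness via symmetrization. One must check that the replacement in the case \(d(v)<d(u)\) or \(d(v)<d(w)\) still strictly increases the edge count. The case \(d(v)\geq\max(d(u),d(w))\) requires the "\(-1\)" from the edge \(uw\) to get strict inequality: the count changes by \(2d(v)-(d(u)+d(w)-1)>0\).
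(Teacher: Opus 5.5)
The paper does not prove this theorem. It is quoted as classical background with a citation to Tur\'an, so there is no in-paper argument to compare against, and your proposal has to stand on its own. It does: the argument you say you would actually use is correct and complete.

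The Zykov symmetrization is self-sufficient. An extremal graph exists, and your cloning argument shows every extremal graph is complete multipartite with at most \(r\) parts. Balancing then shows it is \(T(n,r)\). Hence \(\operatorname{ex}(n,K_{r+1})=|E(T(n,r))|\), and every \(K_{r+1}\)-free graph with that many edges is extremal, hence isomorphic to \(T(n,r)\). Your deletion induction is correct but redundant. The count \(\binom{r}{2}+(r-1)(n-r)\) is right, and your base case \(T(m,r)=K_m\) for \(m\le r\) covers \(n-r<r\).

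Two precision points in the symmetrization:
\begin{itemize}
\item The comparison is between \(d(v)\) and \(\max\{d(u),d(w)\}\), not \(d(u)+d(w)\). If \(d(v)<d(w)\), you clone \(w\), not \(u\).
\item In the balancing step, allow empty parts. Then a graph with fewer than \(r\) nonempty parts is also covered, since \(n\ge r\) forces a part of size at least \(2\).
\end{itemize}

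The paragraph you abandoned contains an arithmetic slip, not a real obstacle. The set \(\{u,v\}\cup(V(K)\setminus\{x_i\})\) has \((r-1)+2=r+1\) vertices. So if \(u,v\in W_i\) lie in different parts of \(T(n-r,r)\), this set is already a \(K_{r+1}\), and no further vertex is needed.

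Hence each \(W_i\) lies inside a single part. The sets \(W_1,\dots,W_r\) partition \(V(G)\setminus V(K)\), so every part of \(T(n-r,r)\) is a union of some of them. Counting nonempty parts then gives \(W_i=P_i\) after relabelling; when \(n-r<r\), all parts are singletons or empty. Combined with the fact that each outside vertex misses exactly one \(x_i\), this shows \(G\) is complete \(r\)-partite with balanced parts \(P_i\cup\{x_i\}\). So the induction alone also settles the equality case.

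Whichever route you keep, remove the dead-end paragraph from the final write-up.
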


In contrast to graphs, hypergraphs admit several natural notions of
containment and several different analogues of paths, cycles, and
trees. These include Berge, minimal, and linear configurations, among
others. Consequently, even natural extensions of classical
graph-theoretic Tur\'an problems can lead to substantially different
questions in the hypergraph setting. Tur\'an-type results for minimal
paths and cycles were obtained by Mubayi and
Verstra\"ete~\cite{mubayi2007minimal}, while exact results for uniform
linear paths were established by F\"uredi, Jiang, and
Seiver~\cite{furedi2014exact}. For Berge paths, sharp bounds were
obtained by Gy\H{o}ri, Katona, and
Lemons~\cite{gyHori2016hypergraph}, with a remaining case resolved
in~\cite{davoodi2018erdHos}. In this paper, we work exclusively in the
linear framework.

A preliminary version of some of the results in this paper appeared in
the proceedings of the International Workshop on Combinatorial
Algorithms (IWOCA 2026)~\cite{AdakVerma2026}. The present article is
self-contained and substantially expands the scope of that work. In
particular, we establish a new degree-sensitive upper bound for the
crown \(E_4^r\), from which the upper bound proved in
\cite{AdakVerma2026} follows as an immediate consequence, and give a new
affine-plane construction yielding a lower bound for
\(\operatorname{ex}^{\mathrm{lin}}_r(n,E_4^r)\). For the linear path
\(P_4^r\), we prove the sharp upper bound for every \(r\geq 2\) and
characterize all hypergraphs attaining equality, thus settling the conjecture from the proceedings paper \cite{AdakVerma2026}.  We also provide
counterexamples to a structural claim used in a previously proposed
proof for \(r=4\) in~\cite{zhang2025linear}.

\subsection{Preliminaries}

A hypergraph \(H=(V(H),E(H))\) consists of a vertex set \(V(H)\) and a
collection \(E(H)\) of subsets of \(V(H)\), called hyperedges. For a
vertex \(v\in V(H)\), the degree \(d_H(v)\), or simply \(d(v)\), is the
number of hyperedges containing \(v\). We denote the minimum and
maximum vertex degrees of \(H\) by \(\delta(H)\) and \(\Delta(H)\),
respectively.

A hypergraph \(H\) is called \(r\)-uniform if every
\(e\in E(H)\) contains exactly \(r\) vertices. It is called
\emph{linear} if any two distinct hyperedges intersect in at most one
vertex.

A linear cycle is a sequence of distinct hyperedges
$e_1,e_2,\ldots,e_t$, where $t\geq 3$,
such that
$e_i\cap e_{i+1}\neq\emptyset$
for every \(i\), where the indices are taken modulo \(t\), and all the
corresponding intersection vertices are distinct. A linear hypergraph
is called \emph{acyclic} if it contains no linear cycle. A connected
acyclic linear \(r\)-uniform hypergraph is called a linear
\(r\)-uniform hypertree.

For a graph \(G\), its \(r\)-uniform \emph{expansion}, denoted by
\(G^r\), is obtained by replacing every graph edge \(uv\in E(G)\) with
an \(r\)-element hyperedge containing \(u\), \(v\), and \(r-2\) new
vertices. The new vertices introduced for distinct graph edges are
chosen to be pairwise disjoint. The resulting hypergraph is
automatically linear and \(r\)-uniform.

\subsection{Linear Tur\'an Numbers}

Let \(\mathcal{F}\) be a family of linear \(r\)-uniform hypergraphs. An
\(r\)-uniform linear hypergraph \(H\) is said to be
\(\mathcal{F}\)-free if it contains no member of \(\mathcal{F}\) as a
subhypergraph. The \emph{linear Tur\'an number}
$$\operatorname{ex}^{\mathrm{lin}}_r(n,\mathcal{F})$$
denotes the maximum number of hyperedges in an \(n\)-vertex
\(\mathcal{F}\)-free linear \(r\)-uniform hypergraph. When
\(\mathcal{F}=\{F\}\), we abbreviate the notation to
\(\operatorname{ex}^{\mathrm{lin}}_r(n,F)\).

The term \emph{linear Tur\'an number} was introduced by
Collier-Cartaino, Graber, and
Jiang~\cite{COLLIER-CARTAINO_GRABER_JIANG_2018}, who established
foundational bounds for linear cycles. However, related extremal
questions appeared much earlier. The celebrated result of Ruzsa and
Szemer\'edi~\cite{ruzsa1978triple} can be expressed in the form
$$
n^{\,2-\frac{c}{\sqrt{\log n}}}
\leq
\operatorname{ex}^{\mathrm{lin}}_3(n,C_3^3)
=
o(n^2),
$$
where \(c>0\) is a constant and \(C_3^3\) denotes the \(3\)-uniform
expansion of the triangle \(C_3\).

The systematic study of acyclic structures in this setting was
initiated by Gy\'arf\'as, Ruszink\'o, and
S\'ark\"ozy~\cite{gyarfas2022linear}, who investigated acyclic
\(3\)-uniform linear hypergraphs and obtained exact values and bounds
for several forbidden configurations. Zhang and
Wang~\cite{zhang2025linear} subsequently considered corresponding
questions in the \(4\)-uniform setting.

The subject has continued to develop rapidly. Zhou and
Yuan~\cite{zhou2025turan} obtained general \(r\)-uniform bounds,
including
$$
\operatorname{ex}^{\mathrm{lin}}_r(n,P_k^r)
\leq
\frac{(2r-3)kn}{2}
$$
for \(r\geq 3\) and \(k\geq 4\). In another direction, Khormali and
Palmer~\cite{khormali2022turan}, and later Zhang
et al.~\cite{zhang2025turan}, investigated Tur\'an numbers of
hypergraph star forests.

Even linear hypertrees with only a few hyperedges exhibit several
different extremal behaviours. For two hyperedges, the only connected
linear hypertree is \(P_2^r\), where \(P_k\) denotes the path with
\(k\) edges. It is immediate that
$$
\operatorname{ex}^{\mathrm{lin}}_r(n,P_2^r)
=
\left\lfloor \frac{n}{r}\right\rfloor.
$$
For three hyperedges, the two possibilities are \(P_3^r\) and
\(S_3^r\), where \(S_k\) denotes the star with \(k\) edges. It was
shown in~\cite{zhang2025linear} that
$$
\operatorname{ex}^{\mathrm{lin}}_r(n,P_3^r)\leq n.
$$

For four hyperedges, the expansions of the three trees with four edges
are \(P_4^r\), \(S_4^r\), and \(B_4^r\), where \(B_4\) is obtained from
\(S_3\) by attaching an additional edge to one of its leaves. Alongside
these expansion configurations, we consider the crown \(E_4^r\), which
is a natural non-expansion linear hypertree. The four configurations
studied in this paper are illustrated in
Figure~\ref{fig:four-edge-configurations}.

\begin{figure}
\centering
\begin{tikzpicture}[scale=0.5]

\begin{scope}[xshift=0cm, yshift=7cm]

    \draw (0,0) ellipse (0.7 and 1.8);
    \draw (1.3,1.4) ellipse (1.6 and 0.7);
    \draw (2.6,0) ellipse (0.7 and 1.8);
    \draw (3.9,-1.4) ellipse (1.6 and 0.7);

    \foreach \y in {1.4,0.6,0.0,-0.6,-1.4}
      \fill (0,\y) circle (1.2pt);

    \foreach \y in {1.4,0.6,0.0,-0.6,-1.4}
      \fill (2.6,\y) circle (1.2pt);

    \foreach \x in {0.9,1.3,1.7}
      \fill (\x,1.4) circle (1.2pt);

    \foreach \x in {3.5,3.9,4.3,4.7}
      \fill (\x,-1.4) circle (1.2pt);

    \node at (1.95,-2.8) {$P_4^r$};
\end{scope}

\begin{scope}[xshift=7.5cm, yshift=7cm]

    \draw (2.6,1.2) ellipse (0.6 and 1.6);
    \draw (2.6,-1.2) ellipse (0.6 and 1.6);
    \draw (1.4,0) ellipse (1.6 and 0.6);
    \draw (3.8,0) ellipse (1.6 and 0.6);

    \fill (2.6,0) circle (1.6pt);

    \foreach \y in {0.6,1.2,1.8,2.4}
      \fill (2.6,\y) circle (1.2pt);

    \foreach \y in {-0.6,-1.2,-1.8,-2.4}
      \fill (2.6,\y) circle (1.2pt);

    \foreach \x in {2.0,1.4,0.8,0.2}
      \fill (\x,0) circle (1.2pt);

    \foreach \x in {3.2,3.8,4.4,5.0}
      \fill (\x,0) circle (1.2pt);

    \node at (2.6,-3.5) {$S_4^r$};
\end{scope}

\begin{scope}[xshift=0cm, yshift=0cm]

    \draw (0,0) ellipse (0.7 and 1.8);
    \draw (2.6,0) ellipse (0.7 and 1.8);
    \draw (1.3,-1.4) ellipse (1.6 and 0.7);
    \draw (3.9,-1.4) ellipse (1.6 and 0.7);

    \foreach \y in {1,0.6,0.2,-0.2,-1.4}
      \fill (0,\y) circle (1.2pt);

    \foreach \y in {1,0.6,0.2,-0.2,-1.4}
      \fill (2.6,\y) circle (1.2pt);

    \foreach \x in {0.9,1.3,1.7}
      \fill (\x,-1.4) circle (1.2pt);

    \foreach \x in {3.5,3.9,4.3,4.7}
      \fill (\x,-1.4) circle (1.2pt);

    \node at (1.95,-2.8) {$B_4^r$};
\end{scope}

\begin{scope}[xshift=7.5cm, yshift=0cm]

    \draw (0,0) ellipse (0.7 and 1.8);
    \draw (2.6,0) ellipse (0.7 and 1.8);
    \draw (5.2,0) ellipse (0.7 and 1.8);
    \draw (2.6,-1.4) ellipse (3.3 and 0.7);

    \foreach \x in {0,2.6,5.2} {
      \foreach \y in {1,0.6,0.2,-0.2,-1.4}
        \fill (\x,\y) circle (1.2pt);
    }

    \foreach \x in {1.3,2.6,3.9}
      \fill (\x,-1.4) circle (1.2pt);

    \node at (2.6,-2.8) {$E_4^r$};
\end{scope}

\end{tikzpicture}
\caption{The four linear \(r\)-uniform configurations with four
hyperedges studied in this paper. The diagrams are schematic: the
unlabelled points in each hyperedge represent its remaining vertices.}
\label{fig:four-edge-configurations}
\end{figure}

The crown \(E_4^r\) consists of three pairwise disjoint hyperedges
together with a fourth hyperedge, called the \emph{base}, that
intersects each of the three disjoint hyperedges. Unlike
\(P_4^r\), \(S_4^r\), and \(B_4^r\), the crown need not arise as the
expansion of a graph tree.

Crowns have been studied in several recent
works~\cite{carbonero2021crowns,carbonero2022crowns,tang2022turan,zhang2025generalized}. Fletcher~\cite{fletcher2021improved} proved
$$
\operatorname{ex}^{\mathrm{lin}}_3(n,E_4^3)
<
\frac{5n}{3}
$$
for the \(3\)-uniform crown. Tang, Wu, Zhang, and
Zheng~\cite{tang2022turan} subsequently obtained an asymptotically
sharp result for the \(3\)-uniform case. Generalized crowns in
higher-uniformity linear hypergraphs were studied by Zhang, Broersma,
and Wang~\cite{zhang2025generalized} and Adak~\cite{adak2026upper}.

More generally, we use \(T_k^r\) to denote a linear \(r\)-uniform
hypertree with \(k\) hyperedges. Such a hypertree need not arise as the
expansion of a graph tree. Determining the linear Tur\'an number of an
arbitrary hypertree appears difficult even when the number of
hyperedges is small. This reflects the difficulty of the corresponding
problem for ordinary graphs, where the Erd\H{o}s--S\'os
conjecture~\cite{erdos1965extremal} concerns the Tur\'an number of an
arbitrary tree. We therefore focus on general lower-bound constructions
and on the linear hypertrees with four hyperedges described above.

\subsection{Steiner Systems}

A Steiner system \(S(t,r,n)\) consists of an \(n\)-element vertex set
together with a collection of \(r\)-element blocks such that every
\(t\)-element subset of the vertex set is contained in exactly one
block. In particular, an \(S(2,r,n)\) is a linear \(r\)-uniform
hypergraph in which every pair of vertices is contained in exactly one
hyperedge.

Steiner systems \(S(2,r,n)\) arise naturally in linear Tur\'an
problems because they attain the pair-counting upper bound for the
number of hyperedges in a linear \(r\)-uniform hypergraph. Steiner
triple systems \(S(2,3,n)\), also denoted by \(STS(n)\), have appeared
repeatedly in the study of linear Tur\'an numbers for triple systems;
see
\cite{gyarfas2022linear,gyarfas2021turan,gyarfas2022linearwicket,sarkozy2023turan}.
Steiner \(2\)-designs with block size \(4\), namely \(S(2,4,n)\),
play an analogous role in the \(4\)-uniform setting, for instance
in~\cite{zhang2025linear}. These systems provide extremal or
near-extremal constructions for several linear Tur\'an-type problems.

We record the standard edge and degree counts for later use.

\begin{lemma}\label[lemma]{steinercount}
Let \(H\) be a Steiner system \(S(2,r,n)\), assuming that such a system
exists. Then
$$
|E(H)|
=
\frac{\binom{n}{2}}{\binom{r}{2}}
=
\frac{n(n-1)}{r(r-1)},
$$
and
$$
d(v)=\frac{n-1}{r-1}
\qquad
\text{for every }v\in V(H).
$$
\end{lemma}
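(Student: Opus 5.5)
The argument is a pair of double counts, and the degree formula comes first because the edge count follows from it.

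\emph{Degree of a vertex.} Fix a vertex \(v\in V(H)\). Each of the other \(n-1\) vertices \(u\) forms, together with \(v\), a pair \(\{u,v\}\). By the defining property of \(S(2,r,n)\), this pair lies in exactly one block, and that block contains \(v\). Conversely, each block containing \(v\) has exactly \(r-1\) other vertices. Linearity means two distinct blocks through \(v\) meet only in \(v\), so their sets of remaining vertices are disjoint.

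Hence the sets \(e\setminus\{v\}\), taken over the \(d(v)\) blocks \(e\ni v\), partition \(V(H)\setminus\{v\}\). This gives
\[
d(v)(r-1)=n-1,
\]
which is the claimed degree formula.

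\emph{Number of blocks.} I would double count the incidences between unordered pairs of vertices and blocks. On one side, each of the \(\binom{n}{2}\) pairs lies in exactly one block. On the other side, each block contains exactly \(\binom{r}{2}\) pairs. Therefore
\[
|E(H)|\binom{r}{2}=\binom{n}{2},
\]
and simplifying the ratio gives \(n(n-1)/(r(r-1))\).

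As a consistency check, the handshake identity \(\sum_v d(v)=r|E(H)|\) together with the degree formula gives the same edge count.

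There is no genuine obstacle here. The only point that needs care is the disjointness of the sets \(e\setminus\{v\}\), which is exactly the "at most one block" half of the Steiner condition. The existence of the system is assumed in the statement, so no divisibility argument is needed.
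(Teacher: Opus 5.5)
Your proposal is correct and matches the paper's proof. Both arguments double count pair--block incidences to get $|E(H)|\binom{r}{2}=\binom{n}{2}$, and count the $n-1$ pairs $\{v,u\}$ through a fixed vertex, with each block through $v$ accounting for exactly $r-1$ of them, to get $d(v)(r-1)=n-1$; presenting the degree count first is immaterial since the two counts are independent.
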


\begin{proof}
Every hyperedge contains \(\binom{r}{2}\) unordered pairs of vertices,
and every pair among the \(\binom{n}{2}\) vertex-pairs is contained in
exactly one hyperedge. Consequently,
$$
|E(H)|\binom{r}{2}
=
\binom{n}{2},
$$
which gives
$$
|E(H)|
=
\frac{\binom{n}{2}}{\binom{r}{2}}
=
\frac{n(n-1)}{r(r-1)}.
$$

Now fix \(v\in V(H)\). Each of the \(n-1\) pairs
\(\{v,u\}\), where \(u\neq v\), is contained in exactly one
hyperedge. Every hyperedge containing \(v\) accounts for exactly
\(r-1\) such pairs. Therefore,
$$
d(v)(r-1)=n-1,
$$
and hence
$$
d(v)=\frac{n-1}{r-1}.
$$
\end{proof}

The formulas in Lemma~\ref{steinercount} imply the necessary
divisibility conditions
$$
r-1\mid n-1
\qquad\text{and}\qquad
\binom{r}{2}\mid\binom{n}{2}
$$
for the existence of an \(S(2,r,n)\). For every fixed \(r\), these
necessary divisibility conditions are also sufficient for all
sufficiently large admissible values of \(n\), by the existence theorem
for designs~\cite{keevash2014existence}.

For small block sizes, more precise existence results are known. A
Steiner triple system \(S(2,3,n)\) exists if and only if
$$
n\equiv 1,3\pmod 6,
$$
while a Steiner system \(S(2,4,n)\) exists if and only if
$$
n\equiv 1,4\pmod{12}.
$$
Moreover, when \(r\) is a prime power, an affine plane of order \(r\)
gives a Steiner system \(S(2,r,r^2)\). In general, however, the
existence of \(S(2,r,r^2)\) is not guaranteed. The
Bruck--Ryser--Chowla theorem
\cite{bruck1949nonexistence,chowla1950combinatorial} gives necessary
conditions for the existence of finite projective planes and rules out
certain orders; for example, no affine plane, and hence no
\(S(2,6,36)\), exists.

A complete characterization of the parameters \((r,n)\) for which a
Steiner system \(S(2,r,n)\) exists is not known. The statements of our
main results are collected in the next section.
\section{Our Results}
\label{sec:our-results}

We begin by determining the exact extremal value for linear stars.
\begin{proposition}\label[proposition]{starprop}
Let \(r\geq 3\) and \(k\geq 2\). Then
$$
\operatorname{ex}^{\mathrm{lin}}_r(n,S_k^r)
\leq
\frac{n(k-1)}{r}.
$$
Equality holds if and only if the extremal linear \(r\)-uniform
hypergraph is \((k-1)\)-regular, whenever such a hypergraph exists.
\end{proposition}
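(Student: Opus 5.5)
The plan is a degree-counting (handshake) argument. The key observation is that in a linear \(r\)-uniform hypergraph, any \(k\) hyperedges through a common vertex \(v\) form a copy of \(S_k^r\) centred at \(v\). Indeed, by linearity any two of them meet only in \(v\). Hence their \(k(r-1)\) vertices other than \(v\) are pairwise distinct, which is exactly the linear star.

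So the first step is to note that if \(H\) is \(S_k^r\)-free, then \(d(v)\le k-1\) for every \(v\in V(H)\). The second step is the double count
$$
r\,|E(H)| = \sum_{v\in V(H)} d(v) \le n(k-1),
$$
which gives \(|E(H)|\le n(k-1)/r\), as claimed.

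For the equality characterization I will argue both directions.

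\emph{If \(|E(H)|=n(k-1)/r\).} Then the sum \(\sum_v d(v)\) equals \(n(k-1)\), while each term is at most \(k-1\). This forces \(d(v)=k-1\) for all \(v\), so \(H\) is \((k-1)\)-regular.

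\emph{If \(H\) is linear, \(r\)-uniform and \((k-1)\)-regular.} By the first step \(H\) contains no \(S_k^r\), since a copy would need a vertex of degree at least \(k\). Also \(r|E(H)|=n(k-1)\), so \(H\) attains the bound. Thus, whenever such a regular hypergraph exists on \(n\) vertices, the bound is attained, and the extremal hypergraphs are exactly these.

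There is no real obstacle. The only point needing care is the first step: verifying that linearity makes the \(k\) edges through \(v\) genuinely form \(S_k^r\). That is, the leaves must be disjoint, which uses that two edges through \(v\) share no other vertex.

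One should also phrase the equality statement correctly when no \((k-1)\)-regular linear hypergraph on \(n\) vertices exists, for example when \(r\nmid n(k-1)\). In that case the inequality is simply strict, and the statement should be read as ``equality is attainable exactly by \((k-1)\)-regular hypergraphs, when they exist.''
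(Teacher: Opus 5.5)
Your proposal is correct and follows the paper's own argument: both use the observation that \(S_k^r\)-freeness forces \(d(v)\le k-1\) for every vertex, then apply the handshake identity \(r|E(H)|=\sum_v d(v)\le n(k-1)\) and read off that equality forces \((k-1)\)-regularity. Your explicit check that linearity makes any \(k\) edges through a vertex a genuine \(S_k^r\), and your converse showing that a \((k-1)\)-regular linear hypergraph is \(S_k^r\)-free and attains the bound, make explicit details the paper leaves implicit.
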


To complement this result, we give a matching lower-bound construction
for arbitrary linear hypertrees under suitable divisibility and
design-existence assumptions.

\begin{theorem}\label{lowerbound}
Let \(r\geq 3\) and \(k\geq 2\), and set
$$
t=(r-1)(k-1)+1.
$$
Suppose that \(t\mid n\) and that a Steiner system \(S(2,r,t)\) exists.
Then
$$
\operatorname{ex}^{\mathrm{lin}}_r(n,T_k^r)
\geq
\frac{n(k-1)}{r}.
$$
This bound is sharp when \(T_k^r\cong S_k^r\).
\end{theorem}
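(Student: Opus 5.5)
The construction is a vertex-disjoint union of \(n/t\) copies of a Steiner system \(S(2,r,t)\), where \(t=(r-1)(k-1)+1\). Call this hypergraph \(H\). It is linear and \(r\)-uniform, since every component is linear and distinct components share no vertex.

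First I would count its hyperedges. By Lemma~\ref{steinercount}, each copy has
\[
\frac{t(t-1)}{r(r-1)}=\frac{t(k-1)}{r}
\]
hyperedges, because \(t-1=(r-1)(k-1)\). Summing over the \(n/t\) copies gives \(|E(H)|=n(k-1)/r\). Lemma~\ref{steinercount} also shows that every vertex has degree \((t-1)/(r-1)=k-1\), so \(H\) is \((k-1)\)-regular.

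The main step is to show that \(H\) contains no copy of \(T_k^r\). Since \(T_k^r\) is connected, any copy of it lies inside a single component, that is, inside one copy of \(S(2,r,t)\).

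It therefore suffices to show that a linear \(r\)-uniform hypertree with \(k\) hyperedges has more than \(t\) vertices. I would prove by induction on the number of hyperedges that a linear hypertree with \(k\) hyperedges has exactly \((r-1)k+1\) vertices.

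For the inductive step, I would use that an acyclic connected linear hypergraph with at least two hyperedges has a \emph{leaf} hyperedge. This is a hyperedge meeting the rest of the hypertree in exactly one vertex. One way to find it is to take a longest linear path and use acyclicity; equivalently, the bipartite incidence graph is a tree. Removing a leaf hyperedge together with its \(r-1\) private vertices leaves a linear hypertree with \(k-1\) hyperedges, which gives the count.

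Consequently \(|V(T_k^r)|=(r-1)k+1=t+(r-1)>t\), so \(T_k^r\) cannot fit inside a component on \(t\) vertices. This yields \(\operatorname{ex}^{\mathrm{lin}}_r(n,T_k^r)\geq n(k-1)/r\).

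Justifying the leaf-hyperedge and vertex count rigorously from the paper's definition of acyclicity is the only step needing care. The subtle point is ruling out two hyperedges of the tree sharing a vertex in a way that would reduce the count. Two distinct hyperedges share at most one vertex by linearity. Any extra overlap beyond what the tree structure forces would create a cycle in the incidence graph, and hence a linear cycle of length at least \(3\).

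For sharpness when \(T_k^r\cong S_k^r\), I would invoke Proposition~\ref{starprop}, which gives the matching upper bound \(n(k-1)/r\). The regularity of \(H\) is consistent with its equality characterization.
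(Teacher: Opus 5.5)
Your proposal is correct and follows essentially the same route as the paper: disjoint copies of \(S(2,r,t)\), the edge count via Lemma~\ref{steinercount}, confinement of any copy of \(T_k^r\) to a single component, and the vertex count \((r-1)k+1=t+(r-1)>t\), with sharpness from Proposition~\ref{starprop}. Your leaf-removal (incidence-tree) proof of the vertex count is the reverse of the paper's edge-by-edge build-up, and it is argued more carefully than the paper's version.
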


We next consider linear hypertrees with four hyperedges. For the
configuration \(B_4^r\), we determine the exact extremal value and
characterize the extremal hypergraphs.

\begin{theorem}\label{b4thm}
Let \(r\geq 3\). Then
$$
\operatorname{ex}^{\mathrm{lin}}_r(n,B_4^r)
\leq
\frac{(r+1)n}{r}.
$$
Equality holds if and only if the extremal hypergraph is a disjoint
union of Steiner systems \(S(2,r,r^2)\), whenever such systems exist.
\end{theorem}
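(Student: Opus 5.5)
We work component by component and show that every connected linear \(r\)-uniform \(B_4^r\)-free hypergraph \(H\) on \(m\) vertices satisfies \(|E(H)|\le (r+1)m/r\), with equality only for \(S(2,r,r^2)\). Summing over components then gives the bound, and the equality case forces every component to be an \(S(2,r,r^2)\). By Lemma~\ref{steinercount}, an \(S(2,r,r^2)\) has \(r(r+1)\) edges on \(r^2\) vertices, i.e.\ exactly \((r+1)r^2/r\) edges. It is \(B_4^r\)-free because it has only \(r^2\) vertices, while \(B_4^r\) has \(4r-3\) vertices; since \(4r-3\le r^2\) for \(r\ge 3\), this counting alone does not suffice. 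Instead we argue structurally. In an affine plane any three lines through a point \(v\), together with a fourth line meeting a leaf line \(\ell\) at a point \(u\ne v\), must leave the new line's other points outside \(\ell\). By the parallel axiom (every pair lies in a line), such a line must meet the other two spokes, and the \(r-1\) new vertices would have to avoid the spokes. We will verify the precise obstruction carefully, possibly by noting that the \(B_4^r\) copy needs the fourth edge to be disjoint from the two other spokes, which is impossible when every point is joined by lines to everything.

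Key step 1 (degree bound). Let \(v\) have degree \(d(v)\ge 3\), and let \(e_1,e_2,e_3\) be three edges at \(v\). Any edge \(f\) meeting \(e_i\setminus\{v\}\) at a vertex \(u\) must intersect \(e_j\cup e_k\) for the other two indices \(j,k\) (chosen among all edges at \(v\)); otherwise \(v\), \(e_j\), \(e_k\), \(e_i\), \(f\) form a \(B_4^r\). If \(d(v)\ge 4\), then \(f\) must meet all but at most one of the other spokes at \(v\), in distinct vertices by linearity. This gives \(d(v)-2\le r-1\), so \(\Delta\le r+1\) wherever there is a non-star edge. It also shows that the closed neighbourhood of a high-degree vertex is essentially closed: every edge touching \(N[v]\) lies inside \(\bigcup_{e\ni v}e\), a set of size \(1+d(v)(r-1)\le r^2\).

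Key step 2 (counting). If \(\Delta(H)\le r+1\), then \(|E(H)|=\frac1r\sum d(v)\le (r+1)m/r\) immediately, with equality iff \(H\) is \((r+1)\)-regular. So it remains to (a) rule out vertices of degree \(\ge r+2\) except in degenerate cases, and (b) characterize the \((r+1)\)-regular case. For (a), if \(d(v)\ge r+2\), step 1 shows that no edge leaves the star at \(v\) except edges meeting it only in \(v\). The component is then a star-like structure plus edges at \(v\) with all other vertices of low degree, and we bound \(|E|\le d(v)\le (m-1)/(r-1) < (r+1)m/r\) directly. For (b), at an \((r+1)\)-regular vertex \(v\), step 1 forces every edge meeting \(N(v)\) to stay inside the \(r^2\)-set \(U_v=\bigcup_{e\ni v}e\). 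So \(U_v\) is a whole component of size \(r^2\) and is \((r+1)\)-regular. The \(r(r+1)\) edges then cover \(r(r+1)\binom r2=\binom{r^2}{2}\) pairs, so every pair is covered exactly once and the component is an \(S(2,r,r^2)\).

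Main obstacle. The delicate part is step 1: making the "must meet the other spokes" argument rigorous, including the case \(d(v)=3\) (where step 1 is weakest) and mixed components in which vertices of degree larger than \(r+1\) coexist with lower-degree ones. I would first try a discharging/averaging argument: assign each edge weight \(1/r\) to each of its vertices, and show that every vertex of degree exceeding \(r+1\) forces enough low-degree vertices nearby to absorb the excess. Separately, the claim that \(S(2,r,r^2)\) is \(B_4^r\)-free must be checked. For the leaf edge at \(u\), the \(r\) lines through \(u\) include the line \(uv\) (the spoke itself) and lines meeting each other spoke. Since there are \(r+1\) lines through \(u\) and only \(r+1-1=r\) other spokes through \(v\), and the spokes partition the remaining points, one of them avoids \(e_j\cup e_k\) only if it is parallel to them. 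I expect to resolve this by choosing exactly which two spokes appear in the copy.
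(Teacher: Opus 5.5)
\paragraph{Review.} Your inequality and your ``only if'' half are correct. They rest on the same key observation as the paper. An edge $f\neq e_i$ through a vertex of $e_i\setminus\{v\}$ avoids $v$, meets each spoke in a distinct vertex, and may miss at most one other spoke. So $f$ meets at least $d(v)-1$ spokes, which gives $d(v)\le r+1$.

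Your component-wise organization is slightly cleaner than the paper's minimal-counterexample argument. A vertex of degree at least $r+2$ forces its whole component to be that star, which has $(m-1)/(r-1)<(r+1)m/r$ edges. Otherwise $\Delta\le r+1$ and degree counting finishes.

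Your stated ``main obstacle'' is not one. In step 2(a) you apply step 1 only with $d(v)\ge r+2\ge 5$, so the case $d(v)=3$ never arises. Your own argument already shows mixed components cannot exist, so no discharging is needed. Similarly, the closure claim that every edge touching $N[v]$ lies in $U_v$ holds exactly when $d(v)=r+1$, because $f$ then meets at least $r$ spokes in distinct vertices. That is the only case you use, so state it that way rather than for ``high-degree'' vertices in general.

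\paragraph{The gap.} The genuine gap is the ``if'' direction: you never prove that $S(2,r,r^2)$ is $B_4^r$-free.

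What you write there contains a false claim. In an affine plane, a line $f$ not through $v$ need not meet both of the other two spokes. It misses exactly one line through $v$ (its parallel), and that line may be $e_j$.

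Your proposed fix, ``choosing exactly which two spokes appear in the copy,'' cannot work, because freeness requires excluding every choice of $e_j,e_k$. You also invoke the parallel axiom without deriving it for $S(2,r,r^2)$, and you mislabel it as ``every pair lies in a line.''

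The missing argument is short. In $S(2,r,r^2)$ every vertex has degree $r+1$, and the $r+1$ edges through $v$ partition $V\setminus\{v\}$, since each pair $\{v,w\}$ lies in exactly one edge. If $v\notin f$, linearity puts the $r$ vertices of $f$ in $r$ distinct edges through $v$. So $f$ is disjoint from exactly one edge through $v$. Hence the fourth edge of a putative $B_4^r$ cannot be disjoint from both $e_j$ and $e_k$.

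This is precisely the content of the paper's Lemma~\ref{lem:steiner}: the edge through $v$ disjoint from a given edge is unique. You need to add it for the characterization to be complete.
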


We then turn to the crown \(E_4^r\). Our principal upper bound is
degree-sensitive and distinguishes vertices of low, intermediate, and
high degree.

\begin{theorem}\label{crowndeg}
Let \(H\) be a linear \(r\)-uniform \(E_4^r\)-free hypergraph, where
\(r\geq 3\). Define
$$
\begin{aligned}
A&=\{v\in V(H):d(v)\geq 2r\},\\
B&=\{v\in V(H):d(v)\leq r\},\\
C&=\{v\in V(H):r+1\leq d(v)\leq 2r-1\}.
\end{aligned}
$$
Then
$$
|E(H)|
\leq
\frac{r}{r-1}|B|
+
\frac{2r-1}{r}|C|.
$$
Equivalently, writing \(s=|A|\), we have
$$
|E(H)|
\leq
\frac{r(n-s)}{r-1}
+
\frac{r^2-3r+1}{r(r-1)}|C|.
$$
\end{theorem}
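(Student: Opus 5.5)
We plan to prove the bound by discharging. Each hyperedge \(e\in E(H)\) starts with charge \(1\), so the total charge is \(|E(H)|\). We will redistribute all of it onto the vertices of \(B\cup C\) so that every vertex of \(B\) receives at most \(r/(r-1)\), every vertex of \(C\) receives at most \((2r-1)/r\), and vertices of \(A\) receive nothing. Summing then gives the first inequality. The equivalent form follows by writing \(|B|=n-s-|C|\) and simplifying \(\frac{2r-1}{r}-\frac{r}{r-1}=\frac{r^2-3r+1}{r(r-1)}\cdot(-1)\cdot(-1)\); this is routine algebra.

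\textbf{Greedy crown lemma.} The structural input is a greedy embedding lemma. Let \(e\) be a hyperedge and let \(x,y,z\in e\) be distinct. Suppose we choose edges \(f_x\ni x\), \(f_y\ni y\), \(f_z\ni z\), all different from \(e\), one at a time in this order. By linearity, a fixed edge \(f\) with \(x\in f\) meets at most \(r-1\) edges through \(y\) other than \(e\): one for each vertex of \(f\setminus\{x\}\), and none at \(x\) since \(f\cap e=\{x\}\). Hence the choices succeed, and produce a crown with base \(e\), whenever
\[
d(x)\ge 2,\qquad d(y)\ge r+1,\qquad d(z)\ge 2r .
\]
Here \(d(y)-1>r-1\) and \(d(z)-1>2(r-1)\). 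We will then read off the forbidden patterns on a single edge \(e\) of an \(E_4^r\)-free hypergraph:
\begin{enumerate}
\item if \(e\) contains two vertices of \(A\), all its other vertices have degree \(1\);
\item if \(e\) contains one vertex of \(A\) and one vertex of \(C\), all its other vertices have degree \(1\);
\item \(e\) contains at most two vertices of \(A\).
\end{enumerate}
In particular no edge lies inside \(A\), since \(r\ge 3\).

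\textbf{Discharging rule.} An edge with no vertex of \(A\) gives \(1/r\) to each of its vertices. An edge with exactly one vertex of \(A\) gives nothing to the \(A\)-vertex. If it also contains a \(C\)-vertex \(c\), it gives \(1/r\) to \(c\) and splits the remaining \(1-1/r\) equally among its \(r-2\) degree-one vertices; otherwise it gives \(1/(r-1)\) to each non-\(A\) vertex. An edge with two \(A\)-vertices gives \(1/(r-2)\) to each of its \(r-2\) degree-one vertices.

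\textbf{Verifying the caps.} For a vertex \(c\in C\), every edge through \(c\) contributes exactly \(1/r\) by pattern 2, so \(c\) receives at most \((2r-1)/r\). For a vertex \(v\in B\) of degree at least \(2\), pattern 1 and the degree-one clause of pattern 2 mean that every edge through \(v\) gives at most \(1/(r-1)\). Hence \(v\) receives at most \(d(v)/(r-1)\le r/(r-1)\). For a degree-one vertex \(v\) we check three values: \(1/(r-2)\), \(\frac{r-1}{r(r-2)}\) and \(1/(r-1)\). Each is at most \(r/(r-1)\), because \(r-1\le r(r-2)\) for \(r\ge3\).

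\textbf{Main obstacle.} The delicate step is the case analysis in the greedy lemma. One must order the three base vertices correctly: the lowest-degree vertex first and the \(A\)-vertex last. One must also check that the blocking count is exactly \(r-1\) per previously chosen edge, because \(e\) itself is excluded and each vertex of the earlier edge lies in at most one edge with the current vertex. A secondary point is confirming that the rule always distributes exactly total charge \(1\) per edge, in particular when \(r=3\) and the split in pattern 2 goes to a single vertex.
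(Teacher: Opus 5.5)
Your argument is correct and is essentially the paper's proof. It uses the same greedy crown-forcing lemma (degree thresholds $2$, $r+1$, $2r$, with at most $r-1$ blocked edges per previously chosen edge), the same case split by $|e\cap A|$ and whether $e$ meets $C$, and the same key inequality $(r-2)\cdot\frac{r}{r-1}\geq 1$. The only difference is the direction of the double count: you discharge each edge's unit charge to its vertices by type-dependent rules, whereas the paper has each vertex spread its budget uniformly, $\omega(v)=\frac{r}{(r-1)d(v)}$ or $\frac{2r-1}{r\,d(v)}$, over its incident edges and checks $\omega(e)\geq 1$, which spares the per-type bookkeeping.
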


The degree-sensitive estimate immediately implies the upper bound
obtained in the preliminary conference version~\cite{AdakVerma2026}.
Consequently, its proof also provides a shorter proof of that result.

\begin{corollary}\label[corollary]{E4thm}
For every \(r\geq 3\),
$$
\operatorname{ex}^{\mathrm{lin}}_r(n,E_4^r)
\leq
\frac{(2r-1)n}{r}.
$$
\end{corollary}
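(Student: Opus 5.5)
The corollary should follow directly from \cref{crowndeg}: we just compare the two coefficients with \((2r-1)/r\) and use that \(A\), \(B\), \(C\) partition \(V(H)\).

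Let \(H\) be an \(n\)-vertex linear \(r\)-uniform \(E_4^r\)-free hypergraph. Since \(A\), \(B\), \(C\) partition \(V(H)\), we have \(|B|+|C|=n-s\le n\) with \(s=|A|\ge 0\). \cref{crowndeg} gives
$$
|E(H)|\le \frac{r}{r-1}|B|+\frac{2r-1}{r}|C|.
$$

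Next we compare coefficients. For \(r\ge 3\),
$$
\frac{2r-1}{r}-\frac{r}{r-1}=\frac{(2r-1)(r-1)-r^2}{r(r-1)}=\frac{r^2-3r+1}{r(r-1)}>0,
$$
since \(r^2-3r+1=1\) at \(r=3\) and the expression is increasing for \(r\ge 3\). Thus \(\frac{r}{r-1}\le\frac{2r-1}{r}\), so
$$
|E(H)|\le \frac{2r-1}{r}\bigl(|B|+|C|\bigr)=\frac{2r-1}{r}(n-s)\le\frac{(2r-1)n}{r}.
$$
Taking the maximum over all such \(H\) gives \(\operatorname{ex}^{\mathrm{lin}}_r(n,E_4^r)\le (2r-1)n/r\).

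All the difficulty sits in \cref{crowndeg} itself, which we may assume. Within this deduction the only point needing care is the sign of \(r^2-3r+1\), which is exactly where \(r\ge 3\) is used.
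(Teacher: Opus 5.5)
Your proposal is correct and matches the paper's proof essentially step for step. You apply the degree-sensitive bound of \Cref{crowndeg}, compare coefficients via $\frac{2r-1}{r}-\frac{r}{r-1}=\frac{r^2-3r+1}{r(r-1)}>0$ for $r\ge 3$, and bound $|B|+|C|\le n$.
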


We complement these upper bounds with a construction for the crown.
Zhang, Broersma, and Wang~\cite{zhang2025generalized} introduced a
construction giving a lower bound for a generalized crown. We show
that, under the appropriate existence assumption, the same
construction is \(E_4^r\)-free.

\begin{theorem}\label{E4lower}
Let \(r\geq 3\), and suppose that an affine plane of order \(r-1\)
exists. If
$(r-1)^2\mid(n-r),
$ then
$$
\operatorname{ex}^{\mathrm{lin}}_r(n,E_4^r)
\geq
\frac{r(n-r)}{r-1}.
$$
\end{theorem}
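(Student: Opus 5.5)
We build the hypergraph by gluing together many copies of an affine plane of order \(r-1\) along a common set of \(r\) vertices, one vertex for each parallel class. Let \(\Pi\) be an affine plane of order \(r-1\). It is a Steiner system \(S(2,r-1,(r-1)^2)\) whose \((r-1)r\) lines fall into \(r\) parallel classes \(\mathcal{L}_1,\dots,\mathcal{L}_r\). Each class has \(r-1\) pairwise disjoint lines, and lines from different classes meet in exactly one point.

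Set \(m=(n-r)/(r-1)^2\), which is an integer by hypothesis. Take a set \(X=\{x_1,\dots,x_r\}\) of \(r\) vertices and \(m\) vertex-disjoint copies \(\Pi^{(1)},\dots,\Pi^{(m)}\) of \(\Pi\) on the remaining \(n-r\) vertices. For each copy \(c\), each \(i\), and each line \(L\in\mathcal{L}_i^{(c)}\), form the hyperedge \(L\cup\{x_i\}\). This yields \(n\) vertices and \(m\cdot r(r-1)=r(n-r)/(r-1)\) hyperedges, each of size \(r\). The construction is exactly the one of Zhang, Broersma, and Wang~\cite{zhang2025generalized}.

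First we check linearity. Take two hyperedges in the same copy. If they come from the same class, their lines are parallel, so they share only \(x_i\). If they come from different classes, their lines meet in one point and their special vertices differ. Two hyperedges from different copies have disjoint plane parts, so they can share only an element of \(X\). Each hyperedge contains exactly one element of \(X\), so the intersection has size at most \(1\).

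The main step is showing the hypergraph is \(E_4^r\)-free. Suppose \(e=L\cup\{x_j\}\), with \(L\) a line of copy \(c\), is a base, and \(f_1,f_2,f_3\) are pairwise disjoint hyperedges each meeting \(e\). Since the \(f_t\) are pairwise disjoint, at most one of them contains \(x_j\). So at least two of them, say \(f_2\) and \(f_3\), meet \(e\) in a point of \(L\), which forces them to lie in copy \(c\).

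Neither \(f_2\) nor \(f_3\) lies in class \(j\). Otherwise it would contain \(x_j\), and its line, being parallel to \(L\) (or equal to it), could not meet \(L\) at a plane point. Also \(f_2\) and \(f_3\) must belong to different classes, since disjointness forces their special vertices to differ. But two lines of \(\Pi^{(c)}\) from different parallel classes intersect, so \(f_2\cap f_3\neq\emptyset\), a contradiction.

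I expect this case analysis on how the leaves meet the base to be the only real obstacle. Everything else is counting, and the edge count \(r(n-r)/(r-1)\) gives the stated lower bound.
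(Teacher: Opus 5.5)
Your proposal is correct and follows the paper's proof essentially verbatim. It uses the same construction: copies of the affine plane glued to \(r\) special vertices, one per parallel class. It checks linearity in the same way. It proves \(E_4^r\)-freeness by the same argument: two of the three leaves avoid the base's special vertex, so they are lines of the same copy from distinct parallel classes, and such lines must intersect.
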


We finally consider the linear path \(P_4^r\). The known results for
\(r=2\) from the Erd\H{o}s--Gallai theorem~\cite{gallai1959maximal}
and for \(r=3\) from~\cite{gyarfas2022linear}, together with the
lower-bound construction in the preliminary version of this
paper~\cite{AdakVerma2026}, suggested that \((r+1)n/r\) is the sharp
general upper bound. We settle this problem for every uniformity and
characterize all cases of equality.

\begin{theorem}\label{P4thm}
Let \(r\geq 2\). Then
$$
\operatorname{ex}^{\mathrm{lin}}_r(n,P_4^r)
\leq
\frac{(r+1)n}{r}.
$$
Equality holds if and only if the extremal hypergraph is a disjoint
union of Steiner systems \(S(2,r,r^2)\), whenever such systems exist.
\end{theorem}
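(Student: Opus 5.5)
We argue component by component. Since $P_4^r$ is connected, $H$ is $P_4^r$-free if and only if every connected component is, and both the bound $|E(H)|\le (r+1)|V(H)|/r$ and the equality characterization are additive over components. So it suffices to prove, for a connected linear $r$-uniform $P_4^r$-free hypergraph $H$ on $m$ vertices, that $|E(H)|\le (r+1)m/r$, with equality only when $H\cong S(2,r,r^2)$. In that case $m=r^2$ and $|E(H)|=r(r+1)=(r+1)m/r$ by Lemma~\ref{steinercount}. Conversely, an $S(2,r,r^2)$ is $P_4^r$-free: a linear $P_4^r$ needs $4r-3$ vertices, and for $r\ge 3$ we have $4r-3\le r^2$, so counting alone does not decide this. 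Instead we use the parallel-class structure of an affine-type design. Every block meets exactly $r$ other blocks, namely one through each point not on it? No: each point lies on $r+1$ blocks, so a block $e$ meets $r\cdot r$ others, while there are $r-1$ blocks disjoint from it. A linear path $e_1e_2e_3e_4$ requires $e_1\cap e_3=\emptyset$ and $e_2\cap e_4=\emptyset$.

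We would therefore double-check the equality case via the paper's own examples, and in the plan treat "$S(2,r,r^2)$ is $P_4^r$-free" as something to verify directly. We also flag the possibility that the intended extremal object is $S(2,r,r^2)$ where the path must reuse vertices. This is the step where we are least certain.

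For the upper bound the plan is a discharging/degree argument on a connected component. Suppose $H$ contains a $P_3^r$ $e_1e_2e_3$; otherwise, by the known bound $\operatorname{ex}^{\mathrm{lin}}_r(n,P_3^r)\le n$, the component already has at most $m<(r+1)m/r$ edges. Fix a longest linear path. The $P_4^r$-freeness forces every hyperedge meeting an end edge $e_1$ (or $e_3$) outside the path to meet the path only in restricted positions. Every edge must lie within the union of the few "core" vertices, with the vertices outside carrying at most degree $1$. Concretely, we would show that if some vertex $v$ has degree $\ge r+2$, then the structure is a star-like configuration with few edges per vertex. We would also show that a connected component with maximum degree $\le r+1$ satisfies $|E|=\frac1r\sum d(v)\le (r+1)m/r$, with equality iff $H$ is $(r+1)$-regular.

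The hardest step is the high-degree case. Here we would assign each edge weight $1$ and distribute it equally among its $r$ vertices. We would then show that a vertex of degree $d\ge r+2$ sits in a component whose other vertices have degree at most $2$, or some similarly bounded degree. This comes from the fact that two edges through $v$ plus an edge at a far vertex of each would form $P_4^r$. The averaging then gives strictly fewer than $(r+1)m/r$ edges.

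Finally, in the equality case $H$ is connected and $(r+1)$-regular. $P_4^r$-freeness then forces any two vertices to be at distance at most one, i.e. every pair is covered, so $H$ is an $S(2,r,m)$. By Lemma~\ref{steinercount}, $(m-1)/(r-1)=r+1$ gives $m=r^2$.
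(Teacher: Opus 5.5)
Your proposal has genuine gaps at all three substantive steps. The ``if'' direction is left open, and you even doubt it. It is immediate from uniqueness of parallels: in $S(2,r,r^2)$ a point $v\notin e$ lies on $r+1$ blocks, $r$ of which meet $e$ at its $r$ distinct points, so exactly one block through $v$ misses $e$ (\Cref{lem:steiner}). In a path $e_1e_2e_3e_4$ the relevant disjointnesses are $e_1\cap e_3=e_1\cap e_4=\emptyset$. Then $v=e_3\cap e_4\notin e_1$ lies on two blocks missing $e_1$, a contradiction.

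More seriously, the case $\Delta(H)\ge r+2$, which is the heart of the upper bound, is not handled, and the structural claim you intend to prove there is false. For $r=4$, take the edges $\{v,v_{i1},v_{i2},v_{i3}\}$ for $i\in[6]$ together with $\{x,v_{1j},v_{2j},v_{3j}\}$ and $\{x,v_{4j},v_{5j},v_{6j}\}$ for $j\in[3]$ (the first example of Section~\ref{counterex}). This hypergraph is connected, linear, has $\delta\ge 2$, and is $P_4^4$-free, since its line graph is a $4$-cycle with each vertex blown up into a $K_3$. Yet $d(v)=d(x)=6=r+2$. So the other vertices need not have degree at most $2$. Your ``two far edges form a $P_4^r$'' heuristic fails because the far edges may meet each other or the other star edge, and the longest-path ``core'' picture is unsupported. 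Even a correct local degree bound (the paper gets $\Delta\le 2r-1$) would not suffice by averaging, since degrees up to $2r-1$ can push the average above $r+1$.

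What you are missing is a \emph{global} edge bound, which the paper builds in four steps:
\begin{enumerate}
\item It passes to a minimal counterexample, which is connected with $\delta(H)\ge 2$. You never make this reduction, and the trace argument below needs it.
\item A $P_4^r$ in a linear hypergraph is exactly an induced $P_4$ in the line graph. So a connected $P_4^r$-free $H$ with at least two edges has a line graph that is a join $\mathcal A\vee\mathcal B$, and every edge of $\mathcal A$ meets every edge of $\mathcal B$ in exactly one vertex. Weighing $\sum_x a_xb_x=|\mathcal A||\mathcal B|$ against $d(x)\le\Delta$ gives $|E(H)|\le r\Delta$, with equality only if $H$ is regular.
\item Incomparable traces of external edges on a maximum star give $\Delta\le 2r-1$ when $\delta\ge 2$ and $\Delta>r$.
\item Then $|E|>(r+1)n/r$ and $n\ge 1+\Delta(r-1)$ force $|E|=r\Delta$ by integrality. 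Hence $H$ is regular and $n=r^2$, contradicting $n\ge 1+(r+2)(r-1)$.
\end{enumerate}

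Your equality step has the same hole. Reaching $(r+1)$-regularity needs this high-degree exclusion, and also the removal of degree-one vertices. Your assertion that $(r+1)$-regularity plus $P_4^r$-freeness forces every pair to be covered merely restates the conclusion. In the paper it follows from $|E|\le r(r+1)$, which gives $n\le r^2$, against $n\ge r^2$ from a star, followed by pair counting.
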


Zhang and Wang~\cite{zhang2025linear} previously claimed the
\(4\)-uniform case. Their proof uses a structural assertion about a
maximum star that is not valid in general. We provide counterexamples
to that assertion after proving \Cref{P4thm}.
    \section{Proof of \Cref{starprop} and \Cref{lowerbound}}
    The results in this section appeared in the preliminary conference
version~\cite{AdakVerma2026}. We include their proofs to keep the
present article self-contained.

 \vspace{2mm}
 \noindent\textit{Proof of \Cref{starprop}.}
        If $H$ is $S_k^r$-free, then $d(v) \leq k-1$ for all $v \in V(H)$. Since each edge consists of $r$ vertices, we get an analogue of the handshaking lemma as $\sum_{v \in V(H)}d(v) = r|E(H)|$. Thus,
        $r|E(H)| \leq n(k-1) \implies |E(H)| \leq \dfrac{n(k-1)}{r}$.
        Therefore we get $ex_r^{\mathrm{lin}}(n , S_k^r) \leq \dfrac{n(k-1)}{r}$ and clearly for equality we need $d(v) = k-1$ for all $v \in V(H)$. Note that a linear $r$-uniform $(k-1)$-regular hypergraph may not exist. In such cases, equality is not feasible. Clearly, $r \mid n(k-1)$ is a necessary condition. Also it is necessary to have,
        $$|E(H)|\binom{r}{2} \leq \binom{n}{2} \implies (k-1)(r-1) \leq (n-1)$$
        These conditions are not sufficient. For example, when $(n, k, r) = (10, 4, 3)$, 
both the above conditions are satisfied, yet no such hypergraph 
exists. In fact, there is no known general characterization for the existence 
of regular linear $r$-uniform hypergraphs.

    Now we give a constructive proof for \Cref{lowerbound} under the divisibility and existence assumptions.
    \vspace{2mm}
    
    \noindent\textit{Proof of \Cref{lowerbound}.}
Let $n = qt$ and partition the $n$-vertex set into $q$ disjoint parts
$V(H) = V_1 \sqcup V_2 \sqcup \cdots \sqcup V_q$,
such that $|V_i|=t$.
Suppose each $V_i$ induces $H_i$, a component of $H$, and each $H_i$ is a copy of $S(2,r,t)$.

Since each $H_i$ is linear and $r$-uniform by definition, and edges of disjoint components are disjoint, we have $H$ to be linear and $r$-uniform.

By \Cref{steinercount}, we have
$$|E(H)| = \sum_{i=1}^q |E(H_i)|
= \frac{n}{t}\cdot \frac{t(t-1)}{r(r-1)}
= \frac{n(t-1)}{r(r-1)} = \frac{n(r-1)(k-1)}{r(r-1)} = \frac{n(k-1)}{r}$$
\begin{claim}
    Every linear $r$-uniform hypertree with $k$ edges has exactly $(r-1)k+1$ vertices.
\end{claim}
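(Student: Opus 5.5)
I will prove the claim by induction on \(k\), the number of hyperedges. For \(k=1\) the hypertree is a single hyperedge with \(r=(r-1)\cdot 1+1\) vertices.

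\emph{Existence of a leaf edge.} For the inductive step, let \(T\) be a linear \(r\)-uniform hypertree with \(k\geq 2\) hyperedges. The key structural fact I need is that \(T\) has a hyperedge \(e\) with two properties:
\begin{itemize}
\item \(e\) meets the union of the other hyperedges in exactly one vertex;
\item removing \(e\) leaves a connected, acyclic, linear hypergraph with \(k-1\) hyperedges.
\end{itemize}
To find such an edge, I will consider a longest linear path \(e_1,e_2,\ldots,e_m\) in \(T\), meaning consecutive edges meet in distinct vertices. I then claim the end edge \(e=e_m\) is a leaf edge.

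Suppose instead that \(e_m\) meets some other edge \(f\neq e_{m-1}\), or meets \(e_{m-1}\)'s neighbourhood, at a vertex \(w\) different from \(v=e_{m-1}\cap e_m\). There are two cases.
\begin{itemize}
\item If \(f\) is not on the path, then we can extend the path by \(f\), or, if \(f\) meets an earlier path edge, we close a linear cycle.
\item If \(f=e_j\) with \(j<m-1\), then \(e_j,\ldots,e_m\) together with the intersection at \(w\) forms a linear cycle.
\end{itemize}
Some care is needed to check that the intersection vertices in these cycles are distinct. Linearity helps here: two edges share at most one vertex, so \(w\neq v\) gives distinct intersection points. Acyclicity then forbids every case, and maximality of the path forbids extension.

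\emph{Counting step.} Removing \(e\) deletes exactly \(r-1\) vertices, namely those of \(e\) other than \(v\). The remaining hypergraph \(T'\) is still linear and acyclic, since it is a subhypergraph. It is connected because any path in \(T\) between vertices of \(T'\) can avoid \(e\): a path entering \(e\) must enter and leave through \(v\). By induction \(T'\) has \((r-1)(k-1)+1\) vertices, so \(T\) has \((r-1)(k-1)+1+(r-1)=(r-1)k+1\) vertices.

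\emph{Alternative route.} As a cross-check, I could count with the bipartite incidence graph of \(T\). This graph has \(|V(T)|+k\) nodes and \(rk\) edges. Connectivity together with the absence of linear cycles should make it a tree, which gives \(|V|+k-1=rk\), that is, \(|V|=(r-1)k+1\). The subtle point on this route is the same as above: a cycle in the incidence graph must be shown to yield a linear cycle. An incidence-graph cycle of length \(4\) would mean two edges sharing two vertices, which linearity excludes. Longer incidence cycles alternate distinct edges and distinct vertices, which is exactly a linear cycle of length at least \(3\).

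\emph{Main obstacle.} I expect the main obstacle to be this verification that a cycle in the incidence structure, or a non-leaf end edge, really produces a linear cycle in the paper's sense. Once that is settled, the rest is bookkeeping.
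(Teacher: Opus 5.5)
Your proof is correct and is essentially the paper's argument run in reverse. The paper builds the hypertree edge by edge and simply asserts that each new edge meets the existing vertex set in exactly one vertex. You instead peel off a leaf edge, and your longest-path argument (or the incidence-graph tree count) justifies exactly this structural fact, which the paper leaves implicit.

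One check is incomplete. In the distinctness check, $w\neq v$ alone is not quite enough, since $w$ might equal an earlier intersection vertex $v_i=e_i\cap e_{i+1}$ with $i\le m-3$; the case $i=m-2$ is ruled out by linearity. In that case, however, $e_{i+1},\ldots,e_m$ already form a linear cycle closing at $w=v_i$, so the argument still goes through.
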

\begin{proof}
Starting with one edge, clearly it contains $r$ vertices.
When we add a new edge in a linear $r$-uniform hypertree, it intersects the existing vertex set in exactly one
vertex, hence contributes precisely $r-1$ new vertices.
After adding $k-1$ further edges, the total vertex count is
$r + (k-1)(r-1) = (r-1)k + 1$.
\end{proof}
Suppose there exists a $T^r_k$ in $H$, clearly it must be in some $H_i$. From the above claim $|V(H_i)| \geq (r-1)k +1 = \bigl((r-1)(k-1)+1\bigr) + (r-1) = t + (r-1)$. Thus we get a contradiction.

Therefore, $H$ is a $T^r_k$-free graph and $|E(H)|  = \frac{n(k-1)}{r}$. Thus, under the given divisibility and existence assumptions we have $ex_r^{\mathrm{lin}}(n,T_k^r) \geq \frac{n(k-1)}{r}$. From \cref{starprop} it is clear that the bound is sharp when $T^r_k \cong S^r_k$.  
    \section{Proof of \Cref{b4thm}}
The result in \Cref{b4thm} appeared in the preliminary conference
version~\cite{AdakVerma2026}. Its proof is included here for
self-containment.\subsection{Proof of inequality}\label{ineqb4}
        Suppose the statement in \Cref{b4thm} is false. Assume $H$ to be a minimal counterexample. That is, $H$ is a $B_4^r$-free linear $r$-uniform  hypergraph on $n$ vertices and   
        $$|E(H)| > \frac{(r+1)n}{r}$$
        and for any subhypergraph $H'$ of $H$, \Cref{b4thm} holds for $H'$.
        \begin{claim}
            $\delta(H) \geq 2$.        \end{claim}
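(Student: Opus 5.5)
We argue by deleting a low-degree vertex and invoking the minimality of $H$. Suppose, for contradiction, that some vertex $v\in V(H)$ has $d(v)\leq 1$. Let $H'$ be the hypergraph obtained from $H$ by deleting $v$ together with the (at most one) hyperedge containing it. Then $H'$ is still linear, $r$-uniform and $B_4^r$-free, since it is a subhypergraph of $H$, and it has $n-1$ vertices. Because $H$ is a minimal counterexample, the inequality of \Cref{b4thm} holds for $H'$, so
$$|E(H')|\leq \frac{(r+1)(n-1)}{r}.$$
Since $|E(H)|\leq |E(H')|+1$, this gives
$$|E(H)|\leq \frac{(r+1)(n-1)}{r}+1=\frac{(r+1)n}{r}-\frac{r+1}{r}+1=\frac{(r+1)n}{r}-\frac1r<\frac{(r+1)n}{r}.$$
This contradicts the assumption $|E(H)|>\frac{(r+1)n}{r}$, so $\delta(H)\geq 2$.

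The main point to check is that removing one vertex and its single incident edge costs the bound exactly $\frac{r+1}{r}>1$, which is more than the one edge lost. A vertex of degree $0$ is the easier case: no edge is removed, so $|E(H)|=|E(H')|\leq \frac{(r+1)(n-1)}{r}$, which is even smaller.

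We should also confirm that "subhypergraph" in the minimality hypothesis allows deleting vertices, not only edges. If the intended notion were edge-deletion only, we would instead delete just the hyperedge through $v$ and keep $v$ as an isolated vertex. That would not help, because $n$ is unchanged and the bound for $H'$ gives no saving. So the argument relies on minimality with respect to vertex deletion, which is the natural reading: the counterexample is taken with the fewest vertices, or with respect to induced subhypergraphs on fewer vertices. Edge cases such as $n$ small, where $H'$ is empty, are trivial, since the bound holds for the empty hypergraph.
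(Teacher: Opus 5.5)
Your proposal is correct and follows the paper's argument: delete a vertex of degree at most one together with its incident hyperedge and invoke minimality. Your computation $\frac{(r+1)(n-1)}{r}+1=\frac{(r+1)n}{r}-\frac{1}{r}$ is the paper's inequality read contrapositively; the paper instead shows that $H'$ would itself be a smaller counterexample.
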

            \begin{proof}
                If $H$ contains an isolated vertex, then we can drop that vertex, resulting in a smaller counterexample. Thus, suppose there exists $v \in V(H)$, such that $d(v) =1$. Let $H' = H \setminus \{v\}$. Clearly, $$|E(H')| = |E(H)| -1 > \frac{(r+1)n}{r} -1 > \frac{(r+1)n}{r} - \frac{(r+1)}{r} = \frac{(r+1)(n-1)}{r}$$
                Thus $H'$ is a counterexample for \Cref{b4thm}, contradicting the minimality of $H$.
            \end{proof}

                    $H$ must be connected, otherwise, we will get at least one connected component of $H$ as a counterexample, contradicting minimality of $H$.
        \begin{claim}
            $\Delta(H) \geq (r+2)$.
            \begin{proof}
                Suppose $\Delta(H) \leq (r+1)$, then $d(v) \leq (r+1)$ for all $v \in V(H)$. Therefore, $\sum _{v \in V(H)}d(v) \leq n(r+1)$. Since $H$ is $r$-uniform, we have 
                $$\sum_{v\in V(H)}d(v) = r|E(H)| > (r+1)n$$
                Thus we get a contradiction.
            \end{proof}
        \end{claim}
        Let $v \in V(H)$ be such that $d(v) = \Delta(H) = k$. Thus we get an expanded star $S=S_k^r$ centered at $v$. Let the edges of $S$ be $E(S)=\{e_1,e_2.\dots,e_k\}$. Let $u \in e_1\setminus \{v\}$. Since $\delta(H) \geq 2$, there exists $f \in E(H)$ containing $u$ such that $f \neq e_1$. Suppose $f$ intersects $t$ many hyperedges from $E(S)$, without loss of generality let these edges be $\{e_1,e_2,\dots,e_t\}$. Since $H$ is $r$-uniform, $t \leq r$.
        
        Note that, $k \geq r+2$, therefore $t \leq k-2$ and thus $e_{t+1},e_{t+2} \in E(S)$. It is easy to see that the edges $f,e_1,e_{t+1},e_{t+2}$ form a $B_4^r$. Thus we get a contradiction. Therefore, no such counterexample exists.
    \subsection{Characterizing extremal Hypergraphs}
        For characterizing the extremal hypergraphs, we will only focus on the cases when $S(2,r,r^2)$ exists. If $S(2,r,r^2)$ does not exist, then the bound in \Cref{b4thm} is not tight.
    \newline\textbf{$\bullet$ If Part}
    
\noindent Assume $H$ to be a disjoint union of copies of a Steiner system $S(2,r,r^2)$.
Fix one component $C\cong S(2,r,r^2)$ with $|V(C)|=r^2$.
\begin{claim}
    $|E(C)| = r(r+1)$.
\end{claim}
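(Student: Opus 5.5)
The claim follows directly from \Cref{steinercount} with $n=r^2$. Since $C$ is a copy of $S(2,r,r^2)$, the lemma gives
$$
|E(C)|=\frac{r^2(r^2-1)}{r(r-1)}=\frac{r^2(r-1)(r+1)}{r(r-1)}=r(r+1).
$$
The only step that needs care is factoring $r^2-1=(r-1)(r+1)$ and cancelling $r(r-1)$, which is valid because $r\geq 3$.

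The same lemma also gives that every vertex of $C$ has degree $\frac{r^2-1}{r-1}=r+1$. This degree count is worth recording alongside the edge count, since it is what the rest of the ``if'' part will use. Summing over the $n/r^2$ components, the total number of hyperedges is $\frac{n}{r^2}\cdot r(r+1)=\frac{(r+1)n}{r}$, so the bound in \Cref{b4thm} is attained.

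I do not expect any obstacle in this claim itself; it is routine bookkeeping. The substantive work in the ``if'' direction comes afterwards, in showing that $C$ is $B_4^r$-free. The plan for that is to use the fact that $C$ has exactly $r^2$ vertices, whereas $B_4^r$ has $4(r-1)+1=4r-3$ vertices by the vertex-count claim in the proof of \Cref{lowerbound}.

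This vertex count only rules out $B_4^r$ when $4r-3>r^2$, which fails for $r\geq 3$. So the $B_4^r$-freeness argument must instead use the fact that every vertex of $C$ has degree $r+1$.
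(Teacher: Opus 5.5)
Your proposal is correct and takes the same route as the paper: both apply the Steiner counting lemma with $n=r^2$ and simplify $\frac{r^2(r^2-1)}{r(r-1)}$ to $r(r+1)$. Your additional remarks are accurate and consistent with how the paper's ``if'' part proceeds: each vertex has degree $r+1$, the total edge count is $\frac{(r+1)n}{r}$, and a vertex count alone cannot exclude $B_4^r$ for $r\geq 3$.
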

\begin{proof}
    From \Cref{steinercount}, we have $$|E(C)| = \frac{\binom{r^2}{2}}{\binom{r}{2}} = \frac{r^2(r^2-1)}{r(r-1)} = r(r+1)$$
\end{proof}
If $H$ has $t$ components, then $n=t r^2$ and $|E(H)| = t\cdot r(r+1)=\frac{(r+1)n}{r}$.

\begin{lemma}\label[lemma]{lem:steiner}
    If $C \cong S(2,r,r^2)$, and $e \in E(C)$, then for any $v\in V(C)$ such that $v \notin e$, there exists a unique $f \in E(C)$ containing $v$ such that $f\cap e = \emptyset$.
\end{lemma}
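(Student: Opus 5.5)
We prove the statement by a counting argument inside the hyperedges through \(v\), using only the parameters from \Cref{steinercount}. With \(n=r^2\), every vertex of \(C\) has degree \(d(v)=\frac{r^2-1}{r-1}=r+1\), and every pair of vertices lies in exactly one hyperedge.

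Fix \(e\in E(C)\) and \(v\in V(C)\) with \(v\notin e\). First we show that exactly \(r\) of the hyperedges through \(v\) meet \(e\). For each \(u\in e\), the pair \(\{u,v\}\) lies in a unique hyperedge \(g_u\), and \(g_u\) contains \(v\). Next, \(g_u\neq e\) since \(v\notin e\), so by linearity \(g_u\cap e=\{u\}\). Hence the map \(u\mapsto g_u\) is injective: if \(g_u=g_{u'}\) with \(u\neq u'\), that hyperedge would meet \(e\) in two vertices. Conversely, every hyperedge through \(v\) that meets \(e\) meets it in some vertex \(u\), and by uniqueness of the block containing \(\{u,v\}\) it equals \(g_u\). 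So the hyperedges through \(v\) that meet \(e\) are precisely the \(r\) distinct hyperedges \(g_u\), \(u\in e\).

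Since \(v\) lies in exactly \(r+1\) hyperedges, exactly \((r+1)-r=1\) of them is disjoint from \(e\). This gives both existence and uniqueness of \(f\).

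There is no real obstacle. The only points needing care are the injectivity of \(u\mapsto g_u\), which comes from linearity, and its surjectivity onto the hyperedges through \(v\) meeting \(e\), which comes from the Steiner property. Together these make the count \(r\) exact rather than merely an upper bound.
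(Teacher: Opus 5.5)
Your proof is correct and follows essentially the same argument as the paper: degree $r+1$ from \Cref{steinercount}, the $r$ distinct hyperedges $g_u$ through $v$ meeting $e$ (one for each $u\in e$), and hence exactly one remaining hyperedge through $v$ disjoint from $e$. You also spell out the injectivity and surjectivity of $u\mapsto g_u$, which the paper uses only implicitly.
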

\begin{proof}
Fix an edge $e\in E(C)$ and a vertex $v\in V(C)\setminus e$. From \Cref{steinercount} we have $d(v) = r+1$. Hence there are exactly $r+1$ edges of $C$ containing $v$.

For each vertex $u\in e$, the pair $\{u,v\}$ is contained in a unique edge, which we denote by $e_u$. Clearly, if $u\neq u'$, then $e_u\neq e_{u'}$. Hence, the $r$ vertices of $e$ give rise to $r$ distinct edges containing $v$, each intersecting $e$ in exactly one vertex.

Since $v$ lies in exactly $r+1$ edges in total, there exists precisely one further edge $f$ containing $v$ that is different from all the $e_u$. This edge $f$ cannot intersect $e$; otherwise it would coincide with $e_u$ for some $u\in e$. Therefore, $f\cap e=\emptyset$.
\end{proof}

\begin{claim}
    $C$ is $B_4^r$-free.
\end{claim}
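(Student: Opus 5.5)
The plan is to argue directly from the structure of $C$, using the degree count $d(w)=r+1$ from \Cref{steinercount} together with the same counting idea as in \Cref{lem:steiner}, applied from the point of view of the centre of the star.

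Suppose, for contradiction, that $C$ contains a copy of $B_4^r$. Write this copy as a star with centre $v$ and edges $e_1,e_2,e_3$, together with a fourth edge $f$. The edge $f$ meets $e_1$ in a single vertex $u\neq v$ and is disjoint from $e_2$ and $e_3$. Since $f\cap e_1=\{u\}$ and $u\neq v$, we get $v\notin f$. The whole proof then reduces to one claim: an edge not containing $v$ is disjoint from at most one edge through $v$. Applied to $f$, which is disjoint from both $e_2$ and $e_3$, this gives the contradiction.

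To prove the claim, fix an edge $g$ of $C$ with $v\notin g$. For each vertex $w\in g$, the pair $\{v,w\}$ lies in a unique edge $h_w$ of $C$, and $h_w$ contains $v$.
\begin{itemize}
\item The edges $h_w$ are pairwise distinct. If $h_w=h_{w'}$ for distinct $w,w'\in g$, then this edge would share the two vertices $w,w'$ with $g$. Linearity would force it to equal $g$, which is impossible because $v\notin g$.
\item Hence the $r$ vertices of $g$ produce $r$ distinct edges through $v$, each meeting $g$.
\item Since $d(v)=r+1$ by \Cref{steinercount}, exactly one edge through $v$ is disjoint from $g$.
\end{itemize}
Taking $g=f$, the distinct edges $e_2$ and $e_3$ both pass through $v$ and are both disjoint from $f$. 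This contradicts the claim, so $C$ contains no $B_4^r$.

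I expect no real obstacle here. The only point needing care is the observation $v\notin f$, which is exactly what makes the counting apply to $f$. The argument is essentially the Playfair-type uniqueness of \Cref{lem:steiner}, seen from the vertex $v$ rather than from a vertex outside a fixed edge.
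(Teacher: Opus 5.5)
Your proof is correct and essentially the same as the paper's. The paper also takes the star centre $v$, notes that $v\notin f$, and obtains a contradiction from the two edges through $v$ that are disjoint from $f$. Your inline claim is exactly \Cref{lem:steiner} applied to the edge $f$ and the vertex $v\notin f$, not a new viewpoint on it, so you could simply cite that lemma instead of reproving it.
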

\begin{proof}
    Suppose there exists a $B_4^r$ in $C$ and assume it consists of edges $e_1,e_2,e_3$ and $e_4$, where $e_1\cap e_2\cap e_3 = \{v\}$ and $e_3\cap e_4 \neq \emptyset$. Clearly $v \notin e_4$ and $e_1,e_2$ contain $v$ and are disjoint from $e_4$. Thus we get a contradiction to \Cref{lem:steiner}.
\end{proof}
Thus, $|E(H)| = \dfrac{(r+1)n}{r}$ and $H$ is $B_4^r$-free.

\noindent\textbf{$\bullet$ Only If Part}

\noindent Now let $H$ be a $B_4^r$-free linear $r$-uniform hypergraph on $n$ vertices satisfying $|E(H)|=\dfrac{(r+1)n}{r}$. Without loss of generality, assume $H$ is connected. We need to show that $H$ is $S(2,r,r^2)$.
\begin{claim}
    $H$ is $(r+1)$-regular
\end{claim}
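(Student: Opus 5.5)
We need to show that a connected \(B_4^r\)-free linear \(r\)-uniform hypergraph \(H\) with \(|E(H)|=(r+1)n/r\) is \((r+1)\)-regular. The plan is to bound the maximum degree, then use the handshake identity to force regularity.

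\textbf{Step 1: reduce to \(\delta(H)\geq 2\).} If \(\delta(H)\le 1\), delete a vertex \(u\) of degree at most one together with its (at most one) edge. The resulting hypergraph has \(n-1\) vertices and at least \((r+1)n/r-1\) edges. Since \((r+1)/r>1\), we have
\[
\frac{(r+1)n}{r}-1>\frac{(r+1)(n-1)}{r},
\]
so the smaller hypergraph violates the inequality already proved in \Cref{ineqb4}. This is a contradiction, so \(\delta(H)\ge 2\).

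\textbf{Step 2: show \(\Delta(H)\le r+1\).} Suppose some vertex \(v\) has \(d(v)=k\ge r+2\), and let \(e_1,\dots,e_k\) be the edges through \(v\). Take \(u\in e_1\setminus\{v\}\). Because \(d(u)\ge 2\), there is an edge \(f\ne e_1\) containing \(u\). By linearity \(v\notin f\), since otherwise \(f\) and \(e_1\) would share both \(u\) and \(v\). Also \(f\) has \(r\) vertices, so it meets at most \(r\) of the \(e_i\). Hence at least two edges \(e_a,e_b\) through \(v\) are disjoint from \(f\). Then \(e_a,e_b,e_1\) form a star centred at \(v\), and \(f\) is attached to \(e_1\) at the vertex \(u\ne v\). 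Together \(f,e_1,e_a,e_b\) form a \(B_4^r\), which is a contradiction. This is exactly the argument of \Cref{ineqb4}, now applied without assuming a counterexample.

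\textbf{Step 3: conclude regularity.} By the handshake identity,
\[
\sum_{v} d(v)=r|E(H)|=(r+1)n.
\]
Each term satisfies \(d(v)\le r+1\), so every term must equal \(r+1\), and \(H\) is \((r+1)\)-regular.

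\textbf{Main obstacle.} The only delicate point is Step 1, where the minimum-degree condition must be justified for an extremal hypergraph rather than a counterexample. The strict inequality \((r+1)/r>1\) makes the deletion argument go through. The case of an isolated vertex is covered by the same deletion argument.
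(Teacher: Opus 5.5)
Your proof is correct and has the same skeleton as the paper's: you rule out a vertex of degree $k\ge r+2$ with the $B_4^r$-finding argument from the proof of the inequality, then conclude regularity from $\sum_v d(v)=(r+1)n$. The one difference is how you get a non-central vertex of degree at least $2$ in the maximum star. You prove $\delta(H)\ge 2$ outright by deleting a low-degree vertex and invoking the already-proved bound (using $(r+1)/r>1$), whereas the paper uses connectivity to argue that otherwise $H\cong S_k^r$, which has only $k=(n-1)/(r-1)<(r+1)n/r$ edges; your version has the mild advantage of not needing the reduction to the connected case.
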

\begin{proof}
    Since $H$ is $r$-uniform, $\sum_{v\in V(H)} d(v) = r|E(H)| = (r+1)n$. Suppose there exists a vertex $v \in V(H)$ such that $d(v) = k \geq r+2$. Now consider the $S_k^r$ centered at $v$. It is easy to see that there exists a vertex in $S_k^r$, other than $v$, with degree at least $2$, otherwise $H \cong S_k^r$, which will result, $n = k(r-1) +1$ and $|E(H)| = k = \frac{n-1}{r-1} < \frac{(r+1)n}{r}$, a contradiction.
    
    Thus, using the arguments as in the proof of inequality in \cref{ineqb4}, we will get a $B_4^r$ in $H$.

    Thus every vertex has degree at most $r+1$ and the average is $r+1$, forcing
$d(v)=r+1$ for all $v\in V(H)$, i.e.\ $H$ is $(r+1)$-regular.
\end{proof}

Let $v \in V(H)$ and the $r+1$ edges containing $v$ be $e_1,\dots,e_{r+1}$.
By linearity, these edges are pairwise disjoint outside $v$, so the set
$$
X:=\{v\}\ \cup\ \bigcup_{i=1}^{r+1}(e_i\setminus\{v\})
$$
has size, $|X| = 1 + (r+1)(r-1) = r^2$.

\begin{claim}
$V(H)=X$.
\end{claim}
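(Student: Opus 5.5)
Since $H$ is connected, it suffices to show that no hyperedge of $H$ meets both $X$ and $V(H)\setminus X$: then $X$ is a union of components, and connectivity forces $V(H)=X$. So suppose some edge $f$ contains a vertex $x\in X$ and a vertex $y\notin X$. We will produce a $B_4^r$ in $H$, contradicting $B_4^r$-freeness; the only ingredients are $(r+1)$-regularity (the previous claim) and linearity.

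First, $x\neq v$, because every edge through $v$ is one of $e_1,\dots,e_{r+1}$ and lies inside $X$. Hence $x\in e_i\setminus\{v\}$ for some $i$, say $i=1$. The edge $f$ is not among the $e_j$, so $v\notin f$. By linearity, $f$ meets each $e_j$ in at most one vertex. Since $y\in f\setminus X$, at most $r-1$ vertices of $f$ lie in $X\setminus\{v\}$, and therefore $f$ meets at most $r-1$ of the $r+1$ edges $e_j$.

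Consequently at least two edges through $v$, say $e_a$ and $e_b$, are disjoint from $f$. Neither is $e_1$, since $e_1$ meets $f$ at $x$. Now consider $e_a,e_b,e_1,f$. They share the centre $v$ among $e_a,e_b,e_1$ (forming an $S_3^r$), $f$ meets $e_1$ at $x\neq v$, and $f$ is disjoint from $e_a$ and $e_b$. So these four edges form a $B_4^r$, which is a contradiction.

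The one step that needs care is the counting that guarantees two edges through $v$ missing $f$. This is exactly where the outside vertex $y$ is used: if we only knew $v\notin f$, then $f$ could meet up to $r$ of the edges $e_j$, leaving just one edge disjoint from $f$, which is not enough.
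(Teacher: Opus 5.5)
Your proof is correct and follows the paper's argument essentially step for step. Connectivity gives an edge $f$ with a vertex in $X$ and a vertex outside $X$, and regularity gives $v\notin f$. The outside vertex then limits $f$ to meeting at most $r-1$ of the $r+1$ star edges, so the two missed star edges together with $e_1$ and $f$ form a $B_4^r$.
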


\begin{proof}
Suppose for contradiction $ V(H)\setminus X \neq \emptyset$.
Since $H$ is connected, there is an edge $f\in E(H)$ meeting $X$ and containing at least one
vertex outside $X$. Choose such an $f$.
Clearly, $v\notin f$.
Let $u\in f\cap X$ (necessarily $u\neq v$). Without loss of generality assume $u \in e_1$.

Now $f$ has size $r$, and contains at least one vertex outside $X$.
So among the remaining vertices of $f$, at most $r-1$ lie in $X$.
Each vertex of $f \cap \{X\setminus\{v\}\}$ lies in \emph{exactly one} of the star edges
$e_1,e_2,\dots,e_{r+1}$, so $f$ can intersect at most $r-1$ of these $r+1$ edges
(including $e_1$). Therefore there exist \emph{two} distinct edges,
say $e_a$ and $e_b$ among $\{e_2,\dots,e_{r+1}\}$, that are disjoint from $f$.

But then the hyperedges, $f,e_1,e_a,e_b$ form a copy of $B_4^r$, contradicting that $H$ is $B_4^r$-free.
\end{proof}

Finally, we show $H$ is a Steiner system $S(2,r,r^2)$.
Because $H$ is linear, any unordered pair of vertices is contained in \emph{at most one} edge.
Therefore, counting vertex-pairs inside edges gives
$$
|E(H)|\binom{r}{2} \le \binom{r^2}{2}.
$$
But substituting $|E(H)|=\dfrac{(r+1)n}{r} = \dfrac{(r+1)r^2}{r}= r(r+1)$ yields equality:
$$
r(r+1)\binom{r}{2}
= r(r+1)\cdot \frac{r(r-1)}{2}
= \frac{r^2(r^2-1)}{2}
= \binom{r^2}{2}.
$$
Hence every pair of vertices in $H$ lies in \emph{exactly one} edge, i.e.\ $H$ is $S(2,r,r^2)$.

\section{Proofs of the results for the crown \(E_4^r\)}

In this section, we prove the degree-sensitive upper bound for the
crown \(E_4^r\). The upper bound obtained in the preliminary conference
version then follows immediately as a corollary. We subsequently prove
the lower-bound construction in \Cref{E4lower}.

\subsection{The degree-sensitive upper bound, \Cref{crowndeg}}

We begin with a crown-forcing observation.

\begin{lemma}
\label[lemma]{crownlem}
Let \(H\) be a linear \(r\)-uniform hypergraph, where \(r\geq3\), and
let \(e\in E(H)\). Suppose that \(e\) contains three distinct vertices
\(x,y,z\) satisfying,
$d(x)\geq2r,
d(y)\geq r+1,
d(z)\geq2.
$
Then \(e\) is the base edge of a copy of \(E_4^r\).
\end{lemma}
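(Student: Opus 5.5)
The plan is to build the three pairwise disjoint "legs" of the crown greedily, one through each of \(z\), \(y\), \(x\), with \(e\) as the base. We go in increasing order of the degree requirement, so that each later choice has enough room to avoid the legs already chosen.

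Throughout we use one consequence of linearity. If \(g\neq e\) is an edge containing a vertex \(w\in e\), then \(g\cap e=\{w\}\). Likewise, for a vertex \(w\) and a vertex \(u\neq w\), at most one edge contains both \(u\) and \(w\).

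\emph{Step 1 (leg at \(z\)).} Since \(d(z)\geq 2\), pick an edge \(f_z\neq e\) with \(z\in f_z\). Then \(f_z\cap e=\{z\}\).

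\emph{Step 2 (leg at \(y\)).} There are at least \(d(y)-1\geq r\) edges \(g\neq e\) through \(y\). Such a \(g\) cannot contain \(z\), because the unique edge through \(y\) and \(z\) is \(e\). So if \(g\) meets \(f_z\), it meets it in a vertex of \(f_z\setminus\{z\}\). Each of these \(r-1\) vertices lies on at most one edge through \(y\), so at most \(r-1\) of the candidates meet \(f_z\). Hence some \(f_y\neq e\) through \(y\) is disjoint from \(f_z\), and \(f_y\cap e=\{y\}\).

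\emph{Step 3 (leg at \(x\)).} There are at least \(d(x)-1\geq 2r-1\) edges \(g\neq e\) through \(x\). By the same reasoning, each such \(g\) avoids \(y\) and \(z\). So it can meet \(f_y\cup f_z\) only in one of the \(2(r-1)\) vertices of \((f_y\setminus\{y\})\cup(f_z\setminus\{z\})\), and each of these vertices blocks at most one candidate. Since \(2r-1>2r-2\), some edge \(f_x\neq e\) through \(x\) is disjoint from both \(f_y\) and \(f_z\).

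\emph{Conclusion.} The edges \(f_x,f_y,f_z\) are pairwise disjoint, and each meets \(e\) in exactly one vertex, namely \(x\), \(y\), \(z\) respectively. These three vertices are distinct, so \(e,f_x,f_y,f_z\) form a copy of \(E_4^r\) with base \(e\).

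The only delicate point is the counting in Steps 2 and 3. It relies on the fact that the vertices of \(e\) itself never block a candidate, since the only edge through two vertices of \(e\) is \(e\). This is exactly why the thresholds \(r+1\) and \(2r\) suffice.
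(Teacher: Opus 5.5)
Your proposal is correct and follows essentially the same route as the paper. Both pick a leg through $z$, then a leg through $y$ avoiding it (at most $r-1$ blockers among at least $r$ candidates), then a leg through $x$ avoiding both (at most $2r-2$ blockers among at least $2r-1$ candidates). You are slightly more explicit than the paper in noting that the candidates through $x$ cannot contain $y$ or $z$, which the paper folds into ``the same linearity argument.''
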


\begin{proof}
Since \(d(z)\geq2\), there exists an edge
$$
f\in E(H)\setminus\{e\}
$$
such that \(z\in f\). We first find an edge through \(y\) that is disjoint from \(f\). There
are at least
$$
d(y)-1\geq r
$$
edges in \(E(H)\setminus\{e\}\) containing \(y\).

No such edge can contain \(z\), since it would then intersect \(e\) in
both \(y\) and \(z\), contradicting the linearity of \(H\). Moreover,
for every vertex \(u\in f\setminus\{z\}\), at most one edge containing
\(y\), other than \(e\), can also contain \(u\). Indeed, two such edges
would intersect in both \(y\) and \(u\), again contradicting
linearity.

Consequently, at most
$|f\setminus\{z\}|=r-1
$ edges containing \(y\), other than \(e\), can intersect \(f\). Since
there are at least \(r\) such edges, there exists an edge
$
g\in E(H)\setminus\{e\}
$
such that
$$
y\in g
\qquad\text{and}\qquad
f\cap g=\emptyset.
$$

We now find an edge through \(x\) that is disjoint from both \(f\) and
\(g\). There are at least
$$
d(x)-1\geq2r-1
$$
edges containing \(x\), other than \(e\).

By the same linearity argument, at most \(r-1\) of these edges can
intersect \(f\), and at most \(r-1\) can intersect \(g\). Thus, at most
$
2(r-1)=2r-2
$ edges containing \(x\), other than \(e\), intersect \(f\cup g\).
Since $
2r-1>2r-2,$
there exists an edge
$
h\in E(H)\setminus\{e\}
$
such that
$$
x\in h,\qquad
h\cap f=\emptyset,
\qquad\text{and}\qquad
h\cap g=\emptyset.
$$

Therefore, \(f,g,h\) are pairwise disjoint and satisfy
$$
z\in e\cap f,\qquad
y\in e\cap g,\qquad
x\in e\cap h.
$$
Hence \(e,f,g,h\) form a copy of \(E_4^r\) with base edge \(e\).
\end{proof}

We now prove the principal crown upper bound.

\begin{proof}[Proof of \Cref{crowndeg}]
Recall the partition
$$
\begin{aligned}
A&=\{v\in V(H):d(v)\geq2r\},\\
B&=\{v\in V(H):d(v)\leq r\},\\
C&=\{v\in V(H):r+1\leq d(v)\leq2r-1\}.
\end{aligned}
$$
Thus,
$
V(H)=A\mathbin{\dot\cup}B\mathbin{\dot\cup}C.
$

We may assume that \(H\) has no isolated vertices. Indeed, deleting
isolated vertices does not change \(E(H)\), \(A\), or \(C\), and can
only decrease \(|B|\). Therefore, proving the desired inequality after
deleting all isolated vertices also proves it for the original
hypergraph.

For every vertex \(v\in V(H)\), define
$$
\omega(v)=
\begin{cases}
\displaystyle
\frac{r}{(r-1)d(v)},&v\in B,\\[2ex]
\displaystyle
\frac{2r-1}{r\,d(v)},&v\in C,\\[2ex]
0,&v\in A.
\end{cases}
$$
For each edge \(e\in E(H)\), define
$$
\omega(e)=\sum_{v\in e}\omega(v).
$$

We claim that
$$
\omega(e)\geq1
\qquad
\text{for every }e\in E(H).
\label{eq:crown-edge-weight}
$$

Fix an arbitrary edge \(e\in E(H)\). We distinguish three cases.

\medskip
\noindent
\textbf{Case 1: \(e\cap A=\emptyset\).}

Every vertex of \(e\) belongs to \(B\cup C\). If \(v\in B\), then
\(d(v)\leq r\), and therefore
$$
\omega(v)
=
\frac{r}{(r-1)d(v)}
\geq
\frac{1}{r-1}
\geq
\frac1r.
$$
If \(v\in C\), then \(d(v)\leq2r-1\), and therefore
$$
\omega(v)
=
\frac{2r-1}{r\,d(v)}
\geq
\frac1r.
$$
Thus, every vertex of \(e\) contributes at least \(1/r\). Since
\(|e|=r\), we obtain
$$
\omega(e)\geq r\cdot\frac1r=1.
$$

\medskip
\noindent
\textbf{Case 2: \(e\cap A\neq\emptyset\) and
\(e\cap C\neq\emptyset\).}

Choose
$
x\in e\cap A \text{ and }
y\in e\cap C
$.
Then
$$
d(x)\geq2r
\qquad\text{and}\qquad
d(y)\geq r+1.
$$

Suppose that some vertex
$
z\in e\setminus\{x,y\}
$ satisfies \(d(z)\geq2\). By \Cref{crownlem}, the edge \(e\) is then the
base of a copy of \(E_4^r\), contradicting the assumption that \(H\)
is \(E_4^r\)-free.

Therefore, every vertex in \(e\setminus\{x,y\}\) has degree one.
There are \(r-2\) such vertices, each belonging to \(B\), and each has
weight
$$
\frac{r}{(r-1)\cdot1}
=
\frac{r}{r-1}.
$$
Consequently,
$$
\omega(e)
\geq
(r-2)\frac{r}{r-1}
\geq1,
$$
where the final inequality follows from \(r\geq3\).

\medskip
\noindent
\textbf{Case 3: \(e\cap A\neq\emptyset\) and \(e\cap C=\emptyset\).}

In this case, every vertex of \(e\) belongs to \(A\cup B\).

Suppose first that \(e\) contains exactly one vertex \(x\in A\). Then
the remaining \(r-1\) vertices belong to \(B\). For every
\(v\in e\setminus\{x\}\),
$$
\omega(v)\geq\frac1{r-1}.
$$
Hence
$$
\omega(e)
\geq
(r-1)\frac1{r-1}
=
1.
$$

Suppose instead that \(e\) contains at least two vertices of \(A\).
Choose distinct vertices
$$
x,y\in e\cap A.
$$
If some vertex
$
z\in e\setminus\{x,y\}
$ had degree at least two, then
$$
d(x)\geq2r,\qquad
d(y)\geq2r\geq r+1,\qquad
d(z)\geq2,
$$
and \Cref{crownlem} would imply that \(e\) is the base of a copy of
\(E_4^r\), a contradiction.

Thus, every vertex in \(e\setminus\{x,y\}\) has degree one. Therefore,
$$
\omega(e)
\geq
(r-2)\frac{r}{r-1}
\geq1.
$$

The three cases prove the claimed bound. Summing over all
edges gives
$$
    |E(H)|
\leq
\sum_{e\in E(H)}\omega(e).
$$

We now interchange the order of summation. Every vertex \(v\in B\)
belongs to exactly \(d(v)\) hyperedges and therefore contributes
$$
d(v)\omega(v)
=
d(v)\frac{r}{(r-1)d(v)}
=
\frac{r}{r-1}.
$$
Similarly, every vertex \(v\in C\) contributes
$$
d(v)\omega(v)
=
d(v)\frac{2r-1}{r\,d(v)}
=
\frac{2r-1}{r},
$$
while every vertex in \(A\) contributes zero. Hence
$$
\begin{aligned}
\sum_{e\in E(H)}\omega(e)
&=
\sum_{v\in V(H)}d(v)\omega(v)\\
&=
\frac{r}{r-1}|B|
+
\frac{2r-1}{r}|C|.
\end{aligned}
$$
Therefore we obtain
$$
|E(H)|
\leq
\frac{r}{r-1}|B|
+
\frac{2r-1}{r}|C|.
$$
To obtain the equivalent form, let $
s=|A|.$
Since
$$
|B|+|C|=n-s,
$$
we have
$$
\begin{aligned}
|E(H)|
&\leq
\frac{r}{r-1}|B|
+
\frac{2r-1}{r}|C|\\
&=
\frac{r}{r-1}(|B|+|C|)
+
\left(
\frac{2r-1}{r}-\frac{r}{r-1}
\right)|C|\\
&=
\frac{r(n-s)}{r-1}
+
\frac{r^2-3r+1}{r(r-1)}|C|.
\end{aligned}
$$
This completes the proof.
\end{proof}

The conference upper bound is now an immediate consequence.

\begin{proof}[Proof of \Cref{E4thm}]
For \(r\geq3\),
$$
\frac{r}{r-1}
\leq
\frac{2r-1}{r},
$$
since
$$
\frac{2r-1}{r}-\frac{r}{r-1}
=
\frac{r^2-3r+1}{r(r-1)}
>0.
$$
Therefore, by \Cref{crowndeg},
$$
\begin{aligned}
|E(H)|
&\leq
\frac{r}{r-1}|B|
+
\frac{2r-1}{r}|C|\\
&\leq
\frac{2r-1}{r}(|B|+|C|)\\
&\leq
\frac{(2r-1)n}{r}.
\end{aligned}
$$
Thus,
$$
\operatorname{ex}^{\mathrm{lin}}_r(n,E_4^r)
\leq
\frac{(2r-1)n}{r}.
$$
\end{proof}

\subsection{A lower-bound construction, \Cref{E4lower}}
\label{subsec:crown-lower-bound}

\begin{proof}[Proof of \Cref{E4lower}]
Set
$
q=r-1
$
and write
$$
n-r=q^2m
$$
for some positive integer \(m\).

Let \(\mathcal{A}\) be an affine plane of order \(q\). Recall that
\(\mathcal{A}\) has \(q^2\) points and \(q(q+1)\) lines, every line
contains \(q\) points, and its lines can be partitioned into
\(q+1=r\) parallel classes
$$
\mathcal{P}_1,\mathcal{P}_2,\ldots,\mathcal{P}_r.
$$
Each parallel class consists of \(q\) pairwise disjoint lines, while
any two lines belonging to different parallel classes intersect in
exactly one point.

Take \(m\) vertex-disjoint copies
$$
\mathcal{A}_1,\mathcal{A}_2,\ldots,\mathcal{A}_m
$$
of \(\mathcal{A}\), and introduce \(r\) new vertices
$
s_1,s_2,\ldots,s_r.
$

For every \(j\in[r]\) and every line \(L\) in the \(j\)-th parallel
class of one of the copies \(\mathcal{A}_i\), introduce the
\(r\)-element hyperedge
$
L\cup\{s_j\}.
$
Let \(H\) denote the resulting \(r\)-uniform hypergraph.

We first verify that \(H\) is linear. Consider two distinct hyperedges
$$
L\cup\{s_a\}
\qquad\text{and}\qquad
M\cup\{s_b\}.
$$

Suppose first that \(L\) and \(M\) belong to different affine-plane
copies. Then the corresponding hyperedges are disjoint when
\(a\neq b\), and intersect only in \(s_a\) when \(a=b\).

Now suppose that \(L\) and \(M\) belong to the same copy. If \(a=b\),
then \(L\) and \(M\) lie in the same parallel class and are disjoint.
Thus, the corresponding hyperedges intersect only in \(s_a\). If
\(a\neq b\), then \(L\) and \(M\) belong to different parallel
classes and intersect in exactly one point, while \(s_a\neq s_b\).
Therefore, any two hyperedges of \(H\) intersect in at most one vertex.
Hence \(H\) is linear.

We next show that \(H\) is \(E_4^r\)-free. Suppose, to the contrary,
that \(H\) contains a copy of \(E_4^r\). Let
$$
e=L\cup\{s_j\}
$$
be its base edge, and let \(f_1,f_2,f_3\) be the three pairwise
disjoint hyperedges intersecting \(e\).

Since \(f_1,f_2,f_3\) are pairwise disjoint, at most one of them can
contain \(s_j\). Relabelling if necessary, assume that
$$
s_j\notin f_2\cup f_3.
$$
Both \(f_2\) and \(f_3\) intersect \(e\). Since neither contains
\(s_j\), each intersects the line \(L\) in an affine-plane point.
Consequently, their underlying lines belong to the same affine-plane
copy as \(L\).

Since \(f_2\) and \(f_3\) are disjoint, they cannot contain the same
new vertex \(s_a\). Therefore, their underlying lines belong to
different parallel classes. However, any two lines belonging to
different parallel classes of an affine plane intersect in exactly
one point. Hence
$$
f_2\cap f_3\neq\emptyset,
$$
contradicting the assumption that \(f_1,f_2,f_3\) are pairwise
disjoint. Therefore, \(H\) is \(E_4^r\)-free.

Finally,
$$
|V(H)|
=
q^2m+r
=
n.
$$
Each of the \(r\) parallel classes contributes \(qm\) hyperedges, and
therefore
$$
|E(H)|
=
rqm
=
r(r-1)m.
$$
Since
$$
n-r=(r-1)^2m,
$$
we obtain
$$
|E(H)|
=
r(r-1)m
=
\frac{r(n-r)}{r-1}.
$$
Thus,
$$
\operatorname{ex}^{\mathrm{lin}}_r(n,E_4^r)
\geq
\frac{r(n-r)}{r-1}.
$$
\end{proof}

\section{Proof of \Cref{P4thm}}

In this section, we prove the sharp upper bound for $P_4^r$ and characterize all
hypergraphs attaining equality.


For a hypergraph $H$, let $L(H)$ denote its \emph{line graph}. The
vertices of $L(H)$ are the hyperedges of $H$, and two vertices of $L(H)$
are adjacent if and only if the corresponding hyperedges intersect.

\begin{lemma}\label[lemma]{lem:path-line-graph}
Let $H$ be a linear $r$-uniform hypergraph. Four distinct hyperedges
$f_1,f_2,f_3,f_4$ form a copy of $P_4^r$, in this order, if and only if
the corresponding vertices induce a path on four vertices in $L(H)$.
\end{lemma}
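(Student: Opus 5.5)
The plan is to unpack the definition of \(P_4^r\) as the \(r\)-uniform expansion of the graph path \(P_4\), and to check both directions directly. Linearity of \(H\) does most of the work. Concretely, a copy of \(P_4^r\) on \(f_1,f_2,f_3,f_4\), in this order, means the following. Consecutive hyperedges \(f_i,f_{i+1}\) share exactly one vertex, which is the image of a vertex of the graph path. Non-consecutive hyperedges are disjoint, since distinct edges of the graph path that are not adjacent share no vertex, and the added vertices are new. The three shared vertices are pairwise distinct.

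For the forward direction, I would read off adjacency in \(L(H)\) from this description. Consecutive pairs intersect, so \(f_1f_2\), \(f_2f_3\) and \(f_3f_4\) are edges of \(L(H)\). The pairs \(\{f_1,f_3\}\), \(\{f_2,f_4\}\) and \(\{f_1,f_4\}\) are disjoint, so they are non-adjacent. Hence the four vertices induce exactly the path \(f_1f_2f_3f_4\) in \(L(H)\).

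For the converse, I assume that \(f_1,f_2,f_3,f_4\) induce the path \(f_1f_2f_3f_4\) in \(L(H)\).
\begin{itemize}
\item Induced means \(f_1\cap f_3=f_2\cap f_4=f_1\cap f_4=\emptyset\).
\item Adjacency together with linearity gives \(f_1\cap f_2=\{a\}\), \(f_2\cap f_3=\{b\}\) and \(f_3\cap f_4=\{c\}\) for single vertices \(a,b,c\).
\item The key step is showing \(a,b,c\) are distinct. If \(a=b\), then \(a\in f_1\cap f_3\). If \(b=c\), then \(b\in f_2\cap f_4\). If \(a=c\), then \(a\in f_1\cap f_4\). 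Each contradicts one of the disjointness conditions.
\end{itemize}
It then remains to check that the structure matches the expansion. Map the graph path \(v_0v_1v_2v_3v_4\) by sending \(v_1\mapsto a\), \(v_2\mapsto b\) and \(v_3\mapsto c\), with \(v_0\) and \(v_4\) sent to any other vertex of \(f_1\) and of \(f_4\), respectively. The remaining \(r-2\) vertices of each \(f_i\) must be distinct from everything used elsewhere. This holds because any two of the \(f_i\) meet in at most the designated vertex: exactly one designated vertex for consecutive pairs, and none for non-consecutive pairs.

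I expect no real obstacle. The only point needing care is the distinctness of the intersection vertices, which is exactly where the induced (non-adjacency) condition is used. Without it, three hyperedges through a common vertex would give a triangle in \(L(H)\) rather than a path.
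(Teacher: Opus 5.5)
Your proposal is correct and follows the paper's proof. For the forward direction you read off adjacency and non-adjacency in $L(H)$. For the converse you use linearity to get single-vertex intersections, and the non-adjacency conditions to force $a,b,c$ to be distinct. Your explicit embedding of the graph path $v_0v_1v_2v_3v_4$ just spells out the paper's remark that the four hyperedges then have exactly the incidence pattern of the $r$-uniform expansion of a path with four edges.
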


\begin{proof}
Suppose first that $f_1,f_2,f_3,f_4$ form a copy of $P_4^r$. Then
$f_i\cap f_{i+1}\neq\emptyset$ for $i\in[3]$, while
$f_i\cap f_j=\emptyset$ whenever $|i-j|\geq 2$. Therefore, the
corresponding vertices induce a path on four vertices in $L(H)$.

Conversely, suppose that the vertices corresponding to
$f_1,f_2,f_3,f_4$ induce a path in this order. Thus, consecutive
hyperedges intersect and nonconsecutive hyperedges are disjoint. Since
$H$ is linear, every nonempty intersection between two of these
hyperedges consists of exactly one vertex. Moreover, the three
consecutive intersection vertices are distinct. Indeed, if two of them
were equal, then a pair of nonconsecutive hyperedges would intersect.
Hence, $f_1,f_2,f_3,f_4$ have exactly the incidence pattern of the
$r$-uniform expansion of a path with four edges, and therefore form a
copy of $P_4^r$.
\end{proof}

A graph containing no induced path on four vertices is called a
\emph{cograph}. We use the following standard decomposition of connected
cographs due to Seinsche~\cite{seinsche1974property}. We include a proof
for completeness.

\begin{lemma}
\label{lem:cograph-complement}
Let $G$ be a connected graph with at least two vertices and containing
no induced path on four vertices, that is $G$ is a connected cograph. Then $\overline{G}$ is disconnected.
\end{lemma}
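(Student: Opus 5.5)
We argue by induction on the number of vertices of \(G\). The key property we use is that the class of graphs with no induced \(P_4\) is closed under taking induced subgraphs and under complementation. The complement of an induced path \(abcd\) is again an induced path, namely \(bdac\), so \(\overline{G}\) is also \(P_4\)-free.

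The base case is \(|V(G)|=2\). Then \(G=K_2\), and \(\overline{G}\) consists of two isolated vertices, so it is disconnected.

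For the inductive step, let \(|V(G)|=n\geq 3\). Choose a vertex \(v\) such that \(G-v\) is still connected; a leaf of a spanning tree works. Set \(G'=G-v\). Then \(G'\) is a connected \(P_4\)-free graph on at least two vertices, so by induction \(\overline{G'}\) is disconnected. Equivalently, \(V(G')\) splits as \(X\cup Y\) with both parts nonempty and every vertex of \(X\) adjacent in \(G\) to every vertex of \(Y\).

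Suppose, for contradiction, that \(\overline{G}\) is connected. Since \(\overline{G}\) is connected and \(\overline{G'}=\overline{G}-v\) is disconnected, the vertex \(v\) must have a \(\overline{G}\)-neighbour in every component of \(\overline{G'}\). In particular, \(v\) is non-adjacent in \(G\) to some \(x\in X\) and to some \(y\in Y\). On the other hand, \(G\) is connected, so \(v\) has a \(G\)-neighbour \(w\), which lies in \(X\) or in \(Y\).

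Say \(w\in X\); the case \(w\in Y\) is symmetric. We now look for an induced \(P_4\) through \(v\). We know the edges \(vw\), \(wy\) and \(xy\) are present in \(G\), and the non-edges \(vx\) and \(vy\) are present in \(\overline{G}\). If \(wx\notin E(G)\), then \(v\,w\,y\,x\) is an induced \(P_4\) in \(G\): its edges are \(vw\), \(wy\), \(yx\), and its non-edges are \(vy\), \(vx\), \(wx\). That contradicts the hypothesis.

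The remaining situation is \(wx\in E(G)\) for every \(G\)-neighbour \(w\) of \(v\) in \(X\) and every non-neighbour \(x\) of \(v\) in \(X\). This is the main obstacle. To handle it, we refine the decomposition: instead of using only the two parts \(X\) and \(Y\), we take \(X\) to be the vertex set of a single component of \(\overline{G'}\). Then \(\overline{G}[X]\) is connected.

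Because \(X\) contains both a neighbour and a non-neighbour of \(v\), connectivity of \(\overline{G}[X]\) gives a pair \(w,x\in X\) with \(wx\notin E(G)\), \(vw\in E(G)\) and \(vx\notin E(G)\). Concretely, walk along a path in \(\overline{G}[X]\) from a neighbour of \(v\) to a non-neighbour of \(v\), and take the first step at which adjacency to \(v\) changes. With this pair, and with \(y\) a non-neighbour of \(v\) in the union \(Y\) of the other components, the argument above produces an induced \(P_4\), namely \(v\,w\,y\,x\).

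If instead every \(G\)-neighbour of \(v\) lies outside \(X\), we apply the same reasoning to the component of \(\overline{G'}\) that contains such a neighbour. That component also contains a non-neighbour of \(v\), and we take \(x\) or \(y\) as appropriate from the remaining components.

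In every case \(G\) contains an induced \(P_4\), a contradiction. Hence \(\overline{G}\) is disconnected.
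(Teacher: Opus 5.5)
Your proof is correct and follows essentially the same route as the paper: induction by deleting a leaf $v$ of a spanning tree, observing that $v$ must have a non-neighbour in every component of $\overline{G-v}$, and exhibiting the induced path $v\,w\,y\,x$ with $w,x$ in one component and $y$ in another. The only real difference is the last step. You use connectivity of that component in $\overline{G-v}$ to produce the needed non-edge $wx$ directly. The paper instead argues that otherwise $N_G(v)\cap C_1$ would be adjacent in $G$ to every other vertex, and hence cut off in $\overline{G}$. Your opening remark that $P_4$-freeness is preserved under complementation is never actually used.
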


\begin{proof}
We prove the statement by induction on $|V(G)|$. The result is immediate
when $|V(G)|=2$ (base case).

Now suppose $|V(G)|\geq 3$. Assume that the lemma is true for any connected graph with at least $2$ vertices, containing no induced path on four vertices and with less than $|V(G)|$ many vertices (induction hypothesis). Let $T$ be a spanning tree of $G$, and let
$x$ be a leaf of $T$. Clearly, $G-x$ is a connected cograph.
Therefore, by the induction hypothesis,
    $\overline{G-x}$ is disconnected.

Let $C_1,C_2,\ldots,C_t, t\geq 2$,
be the connected components of $\overline{G-x}$. Since there are no edges
of $\overline{G-x}$ between two distinct components, every vertex of
$C_i$ is adjacent in $G$ to every vertex of $C_j$ whenever $i\neq j$.

Suppose, for contradiction, that $\overline{G}$ is connected. Since the
only vertex added to $\overline{G-x}$ is $x$, the vertex $x$ must have a
neighbour in $\overline{G}$ in every component $C_i$. Equivalently, for
every $i\in[t]$, there exists a vertex of $C_i$ which is not adjacent to
$x$ in $G$.

On the other hand, $G$ is connected, so $x$ has a neighbor in $G$.
Without loss of generality, suppose that $x$ has a neighbor in $C_1$, say $y$. Therefore,
    $y\in C_1\cap N_G(x).$
As observed above, there also exists
    $w\in C_1\setminus N_G(x)$
Since $t\geq 2$, choose $z\in C_2\setminus N_G(x).$

The components $C_1$ and $C_2$ are complete to each other in $G$, that is in $G$, every vertex of $C_1$ is adjacent to every vertex in $C_2$.
Therefore, $yz,zw\in E(G).$
Since $y$ is adjacent to $x$ in $G$, $xy\in E(G),$ whereas $xz,xw\notin E(G).$

If $yw\notin E(G)$, then the four vertices $x,y,z,w$ result an induced path on four vertices, a contradiction. Hence
    $yw\in E(G).$ The same argument applies to every
$$
    y\in C_1\cap N_G(x)
    \quad\text{and}\quad
    w\in C_1\setminus N_G(x).
$$
Therefore, $C_1\cap N_G(x)$
is complete in $G$ to $C_1\setminus N_G(x)$.

Let $A=C_1\cap N_G(x).$ The set $A$ is nonempty. Moreover, every vertex of $A$ is adjacent in
$G$ to every vertex outside $A$: it is adjacent to $x$ by the definition
of $A$, to every vertex of $C_1\setminus A$ by the preceding argument,
and to every vertex of $C_2\cup\cdots\cup C_t$ because distinct
components of $\overline{G-x}$ are complete to each other in $G$.

Thus, there is no edge of $\overline{G}$ between $A$ and
$V(G)\setminus A$. This contradicts the assumption that
$\overline{G}$ is connected. Therefore, $\overline{G}$ is disconnected.
\end{proof}

The form of Lemma~\ref{lem:cograph-complement} that we will use is the
following immediate consequence.

\begin{corollary}
\label[corollary]{joincor}
Let $G$ be a connected graph with at least two vertices and containing
no induced path on four vertices, that is $G$ is a connected cograph. Then there exists a nontrivial partition
    $V(G)=A\mathbin{\dot\cup}B$ such that every vertex of $A$ is adjacent to every vertex of $B$.
Equivalently,
    $$G=G[A]\vee G[B],$$
where $\vee$ denotes the graph join.
\end{corollary}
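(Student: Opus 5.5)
This follows almost directly from \Cref{lem:cograph-complement}. Let $G$ be as in the statement. By \Cref{lem:cograph-complement}, $\overline{G}$ is disconnected. Choose one connected component $A$ of $\overline{G}$ and set $B=V(G)\setminus A$.

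Both $A$ and $B$ are nonempty. Indeed, $A$ is nonempty because it is a component, and $B$ is nonempty because $\overline{G}$ has at least two components. Hence $V(G)=A\mathbin{\dot\cup}B$ is a nontrivial partition.

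Since $A$ is a union of components of $\overline{G}$, no edge of $\overline{G}$ joins a vertex of $A$ to a vertex of $B$. By the definition of the complement, this means $ab\in E(G)$ for every $a\in A$ and $b\in B$. So every vertex of $A$ is adjacent in $G$ to every vertex of $B$.

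This gives the join decomposition. Every edge of $G$ lies inside $G[A]$, lies inside $G[B]$, or joins $A$ to $B$, and all edges of the last type are present. Therefore $G=G[A]\vee G[B]$.

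I see no real obstacle here. The substantive work, showing that $\overline{G}$ is disconnected, is already done in \Cref{lem:cograph-complement}. The only point needing care is that the partition is nontrivial, which holds because $\overline{G}$ has at least two components.
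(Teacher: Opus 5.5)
Your proof is correct and essentially the same as the paper's. You take one component of $\overline{G}$ (disconnected by Lemma~\ref{lem:cograph-complement}) as $A$ and the rest as $B$, note both are nonempty, and conclude that $A$ is complete to $B$ in $G$ because no $\overline{G}$-edges join them.
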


\begin{proof}
By Lemma~\ref{lem:cograph-complement}, the complement $\overline{G}$ is
disconnected. Let $A$ be the vertex set of one connected component of
$\overline{G}$ and let $B=V(G)\setminus A.$
Since $\overline{G}$ is disconnected, both $A$ and $B$ are nonempty.

There is no edge of $\overline{G}$ between $A$ and $B$. Hence every
possible edge between $A$ and $B$ is present in $G$. Therefore every
vertex of $A$ is adjacent to every vertex of $B$, as required.
\end{proof}

Combining \Cref{lem:path-line-graph,joincor}, if $H$
is connected and $P_4^r$-free, then its hyperedges admit a nontrivial
partition into two classes such that every hyperedge in one class
intersects every hyperedge in the other. This gives the following
inequality.

\begin{lemma}\label[lemma]{joinineq}
Let $H$ be a connected linear $r$-uniform $P_4^r$-free hypergraph. Let
$
m=|E(H)|$ and $k=\Delta(H).$
Then $m\leq rk.$
Moreover, if $m=rk$, then every nonisolated vertex of $H$ has degree
$k$.
\end{lemma}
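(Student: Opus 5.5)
The plan is to turn the join structure of the line graph into a counting argument around a single well-chosen hyperedge. If $m=1$ there is nothing to prove, so assume $m\geq2$. Since $H$ is connected, $L(H)$ is connected. By \Cref{lem:path-line-graph} and the $P_4^r$-freeness of $H$, $L(H)$ has no induced $P_4$, so $L(H)$ is a connected cograph.

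\textbf{Step 1: choose the partition.} By \Cref{lem:cograph-complement}, $\overline{L(H)}$ is disconnected. Let $A$ be the vertex set of one connected component of $\overline{L(H)}$ and let $B=E(H)\setminus A$. As in \Cref{joincor}, every hyperedge of $A$ meets every hyperedge of $B$. The complement of a cograph is a cograph, and $\overline{L(H)}[A]$ is connected. So if $|A|\geq2$, \Cref{lem:cograph-complement} applied to $\overline{L(H)}[A]$ shows that $L(H)[A]$ is disconnected. Hence $A$ splits into classes $A_1,\dots,A_p$ with $p\geq2$ whenever $|A|\geq2$, and hyperedges in different classes are disjoint.

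\textbf{Step 2: bound $|A|$ through a hyperedge $g\in B$.} Fix $g\in B$. Every hyperedge of $A$ meets $g$. Two $A$-edges through the same vertex of $g$ intersect, so they lie in the same class $A_i$. I would like to choose $A$, among the components of $\overline{L(H)}$, so that its edges are pairwise disjoint, i.e.\ every $A_i$ is a singleton. In the extremal Steiner example these components are exactly the parallel classes, so this is the expected situation. Then the $A$-edges meet $g$ in distinct vertices, which gives $|A|\leq r$.

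\textbf{Step 3: bound $|B|$ through a hyperedge $f\in A$.} Every hyperedge of $B$ meets $f$, and $f$ itself is not in $B$. Hence
$$|B|\leq\sum_{v\in f}\bigl(d(v)-1\bigr)\leq r(k-1).$$
Combining with Step 2 gives $m=|A|+|B|\leq r+r(k-1)=rk$.

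\textbf{Equality.} If $m=rk$, every inequality above is tight. Tightness in Step 3 forces $d(v)=k$ for every $v\in f$, and this holds for every $f\in A$. Tightness in Step 2 gives $|A|=r$, so the $A$-edges cover $g$; since $g\in B$ was arbitrary, every vertex lying on a $B$-edge lies on some $A$-edge. Therefore every nonisolated vertex lies on an $A$-edge and has degree $k$.

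\textbf{Main obstacle.} The weak point is Step 2, namely guaranteeing $|A|\leq r$ when no component of $\overline{L(H)}$ consists of pairwise disjoint edges. In that case I would argue recursively. Some class $A_i$ contains two intersecting edges, and $L(H)[A_i]$ is itself a connected cograph, so it splits further as a join. I would then compare the counts for $f$ and $g$ relative to that deeper join, using that every $B$-edge meets every member of every $A_i$. The aim is to show that some vertex of $g$ carries so many $A$-edges that $d_B$ drops there, compensating the excess in $|A|$. Concretely, for $g\in B$ I would use
$$|A|\leq\sum_{v\in g}d_A(v)\leq rk-\sum_{v\in g}d_B(v),$$
and bound $|B|$ by the number of $B$-edges meeting $g$, including $g$ itself. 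If the recursion does not close, this double-counting inequality is the fallback. A crude averaging over $g\in B$ only yields $m\leq2rk$, so the precise choice of $g$ is essential and is where I expect the real work to lie.
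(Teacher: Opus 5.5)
\textbf{The gap is Step 2.} It is not merely unproved. No choice of component, and in fact no join partition at all, need have a side with at most $r$ edges.

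A counterexample: for $c\geq 2$ and $r=c+1$, take vertices $u_1,u_2,x_{ij},y_{ij}$ ($i,j\in[c]$) and edges
$a_i=\{u_1,x_{i1},\dots,x_{ic}\}$, $a_i'=\{u_2,y_{i1},\dots,y_{ic}\}$, $b_j=\{u_2,x_{1j},\dots,x_{cj}\}$, $b_j'=\{u_1,y_{1j},\dots,y_{cj}\}$.
\begin{itemize}
\item This hypergraph is linear and connected.
\item The only disjoint pairs are $\{a_i,a'_{i'}\}$ and $\{b_j,b'_{j'}\}$. So $\overline{L(H)}$ is two copies of $K_{c,c}$, and $L(H)=2K_c\vee 2K_c$ is a cograph. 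Hence $H$ is $P_4^r$-free.
\item The only join partition is $\{a_i,a_i'\}\sqcup\{b_j,b_j'\}$. Neither side consists of pairwise disjoint edges, and both have $2c=2r-2>r$ edges.
\end{itemize}
So the chain $m\le |A|+r(k-1)\le rk$ cannot be closed. Your equality argument, which uses tightness $|A|=r$, falls with it.

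The fallback is also unsound. $B$ need not be pairwise intersecting, so $|B|$ is not bounded by the number of $B$-edges meeting $g$. In the example above with $g=b_1$, only $b_1,\dots,b_c$ among the $2c$ edges of $B$ meet $g$. Combined, your two fallback inequalities would give $m\le rk-r+1$. That already fails for $S(2,r,r^2)$, where $m=r(r+1)$ and $k=r+1$.

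\textbf{The missing idea.} Stop bounding one side through a single edge. Instead double count over all vertices, and in Step 3 use the slack $b_v\le k-a_v$ rather than $d(v)-1$.

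Write $s=|A|$, $t=|B|$, and let $a_v,b_v$ be the numbers of $A$- and $B$-edges at $v$.
\begin{itemize}
\item For each $f\in A$ you have exactly $t=\sum_{v\in f}b_v\le\sum_{v\in f}(k-a_v)$.
\item Summing over $f\in A$ gives $st=\sum_v a_vb_v\le rks-\sum_v a_v^2$.
\item Symmetrically, $st\le rkt-\sum_v b_v^2$.
\item Multiply these by $t^2$ and $s^2$ and add. Then use $t^2a_v^2+s^2b_v^2\ge 2st\,a_vb_v$ together with $\sum_v a_vb_v=st$. This yields $st(s+t)^2\le rk\,st(s+t)$, that is, $m\le rk$.
\item Equality forces $a_v+b_v=k$ at every vertex with $a_v+b_v>0$, which is the regularity statement.
\end{itemize}
This works for an arbitrary join partition, so no special component is needed. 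It is essentially the paper's argument, which packages the same convexity as the pointwise bound $a_xb_x/d_H(x)\le q^2a_x+p^2b_x$ with $p=s/m$ and $q=t/m$.
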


\begin{proof}
If $m=1$, then $k=1$ and the inequality follows from $r\geq 2$. Also,
equality is impossible in this case. Thus, assume that $m\geq 2$.

By \Cref{lem:path-line-graph}, $L(H)$ contains no induced path on four
vertices. Since $H$ is connected, $L(H)$ is connected. By
\Cref{joincor}, there exists a partition
$$
E(H)=\mathcal{A}\sqcup\mathcal{B},
\qquad
|\mathcal{A}|=s,
\qquad
|\mathcal{B}|=t,
$$
where $s,t>0$, such that every hyperedge of $\mathcal{A}$ intersects every
member of $\mathcal{B}$. For each $x\in V(H)$, define
$$
a_x=|\{A\in\mathcal{A}:x\in A\}|,
\qquad
b_x=|\{B\in\mathcal{B}:x\in B\}|,
\qquad
d_H(x)=a_x+b_x.
$$
Counting incidences with the two edge classes gives
$$
\sum_{x\in V(H)}a_x=rs
\qquad\text{and}\qquad
\sum_{x\in V(H)}b_x=rt.
$$
Every pair of hyperedges $(A,B)\in\mathcal{A}\times\mathcal{B}$ has exactly one
common vertex: it has at least one common vertex by the choice of the
partition, and at most one by linearity. Hence,
$$
\sum_{x\in V(H)}a_xb_x=st.
$$

We now compare the number of cross-pairs between $\mathcal{A}$ and
$\mathcal{B}$ with the maximum degree of $H$.
Since, $\sum_{x\in V(H)}a_xb_x=st$, $a_xb_x$ can be viewed as the number of pairs
$(A,B)\in\mathcal{A}\times\mathcal{B}$ whose unique intersection occurs
at the vertex $x$.

Recall that $\Delta(H) = k$. Our aim is to bound $a_xb_x$ locally using the degree
    $d_H(x)=a_x+b_x\leq k.$
For this purpose, put
    $p=\dfrac{s}{m}$ and $q=\dfrac{t}{m}.$
Thus $p+q=1$. 

The choice of these two quantities is motivated by the fact
that the total $\mathcal{A}$ and $\mathcal{B}$ incidences are
    $\sum_x a_x=rs$ and $\sum_x b_x=rt,$
and we want to obtain the term $rst/m$ after summing a suitable linear
combination of $a_x$ and $b_x$.

\begin{claim}\label[claim]{cl6.5}
    For a vertex $x$ with $d_H(x) >0$,  $\dfrac{a_xb_x}{d_H(x)}
    \leq
    q^2a_x+p^2b_x.$
\end{claim}
   \begin{proof} Recall that $p+q =1$, therefore $p^2+q^2 -1 = -2pq$. Thus
\begin{align*}
q^2a_x+p^2b_x-\frac{a_xb_x}{d_H(x)}
&=
q^2a_x+p^2b_x-\frac{a_xb_x}{a_x+b_x}
=
\frac{
q^2a_x(a_x+b_x)
+
p^2b_x(a_x+b_x)
-
a_xb_x
}{d_H(x)}\\
&=
\frac{
q^2a_x^2+p^2b_x^2+
(q^2+p^2-1)a_xb_x
}{d_H(x)}\\
&=
\frac{q^2a_x^2-2pqa_xb_x+p^2b_x^2}{d_H(x)}
=
\frac{(qa_x-pb_x)^2}{d_H(x)}
\geq 0.
\end{align*}\end{proof}
Now, since $d_H(x)\leq k$, we also have
    $\dfrac{a_xb_x}{k}
    \leq
    \dfrac{a_xb_x}{d_H(x)}.$
Thus from \Cref{cl6.5} we get
$$
    \frac{a_xb_x}{k}
    \leq
    \frac{a_xb_x}{d_H(x)}
    \leq
    q^2a_x+p^2b_x.
$$
Thus, summing over all vertices with $d_H(x)>0$ we get,
$$
\frac{1}{k}\sum_{x:d_H(x)>0}a_xb_x
\leq
q^2\sum_{x:d_H(x)>0}a_x
+
p^2\sum_{x:d_H(x)>0}b_x=
q^2rs+p^2rt.
$$
Using $\sum_xa_xb_x=st,$ we obtain
    $\dfrac{st}{k}
    \leq
    q^2rs+p^2rt.$
We now simplify the right-hand side. Since
    $p=\dfrac{s}{m},$ and $q=\dfrac{t}{m},$
and $s+t=m$, we have
$$
q^2rs+p^2rt
=
r\left(
\frac{t^2s}{m^2}
+
\frac{s^2t}{m^2}
\right)=
\frac{rst(s+t)}{m^2}=
\frac{rst}{m}.
$$
Therefore, $\dfrac{st}{k}\leq\dfrac{rst}{m}.$
Since $s,t>0$, we may cancel $st$, obtaining
$\dfrac{1}{k}\leq\dfrac{r}{m}.$ Therefore,
    $m\leq rk.$
    
\vspace{2mm} 

For the equality case, observe that, for every non-isolated vertex $x$,
$$
q^{2}a_x+p^{2}b_x-\frac{a_xb_x}{k}
=
\frac{(qa_x-pb_x)^{2}}{d_H(x)}
+a_xb_x\left(\frac{1}{d_H(x)}-\frac{1}{k}\right).
$$
Both terms on the right are nonnegative. If $m=rk$, equality forces both
terms to vanish. Thus $qa_x=pb_x$. Since $p,q>0$ and
$a_x+b_x=d_H(x)>0$, we have $a_xb_x>0$. Consequently,
$a_xb_x\left(\dfrac{1}{d_H(x)}-\dfrac{1}{k}\right)=0$ implies $d_H(x)=k$. Hence $H$ is $k$-regular.
\end{proof}

We next give the maximum-degree bound that will be combined with
\Cref{joinineq}.

\begin{lemma}\label[lemma]{maxtrace}
Let $H$ be a linear $r$-uniform $P_4^r$-free hypergraph with
$\delta(H)\geq 2$. If $k=\Delta(H)>r,$ then
$k\leq 2r-1.$
\end{lemma}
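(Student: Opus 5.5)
We argue by contradiction. Suppose that \(H\) is linear, \(r\)-uniform, \(P_4^r\)-free, has \(\delta(H)\geq 2\), and has \(k=\Delta(H)\geq 2r\). The aim is to exhibit four hyperedges \(X,e_a,e_b,Y\) inducing a path in \(L(H)\), which by \Cref{lem:path-line-graph} is a copy of \(P_4^r\). Fix a vertex \(v\) with \(d(v)=k\) and the star \(e_1,\dots,e_k\) through \(v\). By linearity these edges are pairwise disjoint outside \(v\). The basic tool, already used in the proofs of \Cref{b4thm} and \Cref{crownlem}, is a trace count: a hyperedge not containing \(v\) meets at most \(r\) of the star edges, each in a single vertex.

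\textbf{Steps in order.} First, the two middle edges of the path will be two star edges \(e_a,e_b\). They meet at \(v\), and the path needs \(X\cap e_a\neq\emptyset\), \(Y\cap e_b\neq\emptyset\), \(X\cap e_b=Y\cap e_a=\emptyset\) and \(X\cap Y=\emptyset\). Second, choose \(Y\). Since \(\delta(H)\geq2\), every \(w\in e_b\setminus\{v\}\) lies on a non-star edge \(Y\), and \(Y\) meets at most \(r\) star edges. Hence at least \(k-r\geq r\) star edges are disjoint from \(Y\), and any of them is a candidate for \(e_a\). Third, choose \(X\). For a candidate \(e_a\) and \(x\in e_a\setminus\{v\}\), take a non-star edge \(X\ni x\), which exists by \(\delta\geq2\). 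We need \(X\) to avoid \(e_b\) and \(Y\).

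\textbf{Main obstacle.} The difficulty is the third step: we must show that not every choice of \(X\) is blocked. There are at least \(r(r-1)\) pairs \((e_a,x)\) to choose from, namely \(r\) candidate edges with \(r-1\) vertices each. Edges through distinct vertices of one \(e_a\) are distinct, since they are non-star edges and so cannot meet \(e_a\) twice. On the other side, the blocking set \((Y\cup e_b)\setminus\{v\}\) has at most \(2r-2\) vertices. The plan is to bound, for each blocking vertex \(y\), how many of the selected edges \(X\) can pass through \(y\). The issue is that one non-star edge through \(y\) can meet up to \(r-1\) star edges, so a naive bound fails. I would first try to exploit the freedom in choosing \(b\) and \(Y\), say by taking \(Y\) to meet as few star edges as possible. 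I would then use the observation that if a blocking edge \(X\) meets both \(e_a\) and \(Y\), then \(X\) itself is a \(P_3\)-type connector. That should let us reroute to a different induced path: either \(X,Y,e_{b'},e_{c}\) or \(Y,X,e_a,e_c\), where \(e_{b'},e_c\) are star edges missed by both \(X\) and \(Y\). Such edges exist because \(X\) and \(Y\) together meet at most \(2r\) star edges, hence at most \(2r-1\) distinct ones once a shared star edge such as \(e_a\) is taken into account, while \(k\geq 2r\).

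\textbf{Where \(k\geq2r\) enters.} This count explains the threshold: with \(k\geq 2r\) there is always a star edge outside the at most \(2r-1\) traced ones. I expect the case analysis on how \(X\) and \(Y\) intersect each other and the star to be the delicate part of the argument. The bound \(k\geq2r\) should be exactly what makes every branch produce an induced \(P_4\) in \(L(H)\).
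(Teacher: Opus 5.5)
Your setup is the right one: the maximum star, traces of size at most $r$, and external edges through every non-centre star vertex by $\delta(H)\geq 2$. Your two target configurations are also exactly the ones that work, namely $X,e_a,e_b,Y$ when $X\cap Y=\emptyset$, and $Y,X,e_a,e_c$ when $X\cap Y\neq\emptyset$. However, the proposal never closes the argument, and the case it leaves open is the one the paper's key observation is designed for.

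Write $T(\cdot)$ for the set of star indices an external edge meets, and suppose $a\in T(X)\setminus T(Y)$. If $X\cap Y=\emptyset$, then $X,e_a,e_j,Y$ is a $P_4^r$ for any $j\in T(Y)\setminus T(X)$. So, once you allow yourself to switch $b$, the only blocked situation is $X\cap Y=\emptyset$ with $T(Y)\subsetneq T(X)$. Your rerouting does not reach this case, because it needs $X\cap Y\neq\emptyset$. Your suggested remedies do not exclude it either: counting blocking vertices is something you already note fails, and choosing $Y$ to meet as \emph{few} star edges as possible goes the wrong way.

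The missing idea is to guarantee two external edges with incomparable traces. The paper does this by observing that the traces cover $[k]$ (because $\delta(H)\geq 2$) while each has size at most $r<k$, so they cannot form a chain under inclusion. In your language: do not fix $e_b$ first, but take $Y$ with $|T(Y)|$ \emph{maximum}. Then any external $X$ through a vertex of some $e_a$ with $a\notin T(Y)$ cannot satisfy $T(X)\supseteq T(Y)$. So $T(X)$ and $T(Y)$ are incomparable, and both of your configurations apply.

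Your justification of $|T(X)\cup T(Y)|\leq 2r-1$ is also incorrect as stated. The edge $e_a$ cannot be a shared star edge, since you chose it disjoint from $Y$. The correct reason uses a common vertex $z\in X\cap Y$, which is not $v$ because both edges are external.
\begin{itemize}
\item If $z$ lies on some star edge $e_c$, then $c\in T(X)\cap T(Y)$.
\item Otherwise $X$ has at most $r-1$ vertices on star edges, so $|T(X)|\leq r-1$.
\end{itemize}
Either way the union has at most $2r-1<k$ indices. This is the same observation the paper uses to exclude $k=2r$, after first obtaining $k\leq 2r$ from $T(f)\cup T(g)=[k]$. With these two repairs your outline becomes a complete proof, essentially the paper's.
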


\begin{proof}
Choose $v\in V(H)$ with $d(v)=k$, and let $e_1,e_2,\ldots,e_k$
be the hyperedges containing $v$. By linearity, $e_i\cap e_j=\{v\}
\text{ for all distinct }i,j\in[k].$
We call a hyperedge \emph{external} if it does not contain $v$. For an
external hyperedge $f$, define its trace on the maximum star by
$$
T(f)=\{i\in[k]:f\cap e_i\neq\emptyset\}.
$$
Every index in $T(f)$ uses a different vertex of $f$. Indeed, if the
same vertex of $f$ belonged to both $e_i$ and $e_j$, then it would
belong to $e_i\cap e_j=\{v\}$ (due to linearity), contradicting that $f$ is external.
Therefore, $|T(f)|\leq r.$

Let $i\in[k]$ and
choose $x\in e_i\setminus\{v\}$. Since $\delta(H)\geq 2$, there exists
a hyperedge $f\neq e_i$ containing $x$. This hyperedge cannot contain
$v$, since otherwise $f$ and $e_i$ would intersect in both $x$ and
$v$, contradicting linearity. Thus, $f$ is external and $i\in T(f)$.
Therefore, $\bigcup_fT(f)=[k],$
where the union is over all external hyperedges. Therefore, the family of traces covers $[k]$.

Since $k>r$, the traces cannot be linearly ordered by inclusion. Indeed,
if they were, a largest trace would contain their union $[k]$, which is
impossible because every trace has cardinality at most $r$. Hence,
there exist external hyperedges $f$ and $g$ with incomparable traces.

\begin{claim}
    If $f$ and $g$ are external hyperedges with incomparable traces, then $f\cap g \neq \emptyset$ and $T(f)\cup T(g) = [k]$.
\end{claim}
\begin{proof} Set
$A=T(f)$ and $B=T(g)$. Choose $i\in A\setminus B$ and $j\in B\setminus A.$
If $f\cap g=\emptyset$, then the hyperedges
$f,e_i,e_j,g$
form a copy of $P_4^r$. This is a contradiction.
Therefore, $f\cap g\neq\emptyset.$

Suppose now that there exists
$l\in[k]\setminus(A\cup B)$. Then $
e_l,e_i,f,g$ forms a copy of $P_4^r$. This is again a contradiction. Hence, $
A\cup B=[k].$\end{proof}

Since $|A|,|B|\leq r$, we obtain $k\leq 2r$.

It remains to rule out $k=2r$. In this case, it is easy to check that we must have $|A|=|B|=r$ and
$A\cap B=\emptyset$.
Since $f$ has $r$ vertices and meets the $r$ star edges indexed by
$A$, every vertex of $f$ lies in exactly one of these star edges.
Similarly, every vertex of $g$ lies in exactly one of the star edges
indexed by $B$. 

Let $z\in f\cap g$. Then $z\in e_a\cap e_b$
for some $a\in A$ and $b\in B$. Since $A\cap B=\emptyset$, we have
$a\neq b$, and therefore $e_a\cap e_b=\{v\}$. This gives $z=v$, which
is impossible because $f$ and $g$ are external. Thus, $k\neq 2r$, and
hence $k\leq 2r-1$.
\end{proof}

\subsection{Proof of the inequality}

Suppose the inequality is false. Assume $H$ to be minimal counterexample. That is, $H$ is a $P_4^r$-free linear $r$-uniform hypergraph on $n$ vertices and $|E(H)| > \dfrac{(r+1)n}{r}$.

Similar to the proof of \Cref{b4thm}, we can get that $H$ must be connected with $\delta(H) \geq 2$ and $\Delta(H) \geq r+2$.
Let $k=\Delta(H)$, therefore
$k\geq r+2.$
By \Cref{maxtrace} we get, $r+2\leq k\leq 2r-1.$

When $r=2$, this interval is empty, and we immediately obtain a
contradiction. Thus, we may assume that $r\geq 3$.

The $k$ hyperedges through a vertex of degree $k$ intersect pairwise
exactly in their common centre. Therefore, their union contains
exactly $1+k(r-1)$ vertices, and hence
$$
n\geq 1+k(r-1).
$$
It follows that
$$
|E(H)|
>\frac{r+1}{r}n \geq
\frac{r+1}{r}\bigl(1+k(r-1)\bigr)= rk-\frac{k-r-1}{r}.
$$
Since $r+2\leq k\leq 2r-1$, we have
$$
0<\frac{k-r-1}{r}\leq\frac{r-2}{r}<1.
$$
Thus, $|E(H)|>rk-1$, and the integrality of $|E(H)|$ gives $
m\geq rk$, where $|E(H)| = m$.
On the other hand, \Cref{joinineq} gives $m\leq rk$.
Therefore, $m=rk.$
By the equality statement in \Cref{joinineq}, $H$ is
$k$-regular (since $\delta(H) \geq 2$, it does not contain any isolated vertices). Using the handshaking lemma we get,
$kn=r|E(H)|=r^2k,$
and hence $n=r^2.$

However, since $k\geq r+2$, the maximum star occupies at least
$$
1+(r+2)(r-1)=r^2+r-1>r^2
$$
vertices, a contradiction. Therefore, every $n$-vertex linear
$r$-uniform $P_4^r$-free hypergraph satisfies
$$
|E(H)|\leq\frac{(r+1)n}{r}.
$$

\subsection{Characterizing the extremal hypergraphs}

We now characterize all hypergraphs attaining equality. 

\vspace{2mm}

\noindent\textbf{$\bullet$ Only If Part}

\noindent It is enough to consider the connected case. Indeed, if
$H_1,\ldots,H_t$ are the nontrivial connected components of $H$, then
$$
|E(H)|=\sum_{i=1}^t |E(H_i)|
\leq \frac{r+1}{r}\sum_{i=1}^t |V(H_i)|
\leq \frac{r+1}{r}n.
$$
Hence equality for $H$ implies that $H$ has no isolated vertices and
that every connected component also attains equality.

Thus, for the only-if part, we may assume that $H$ is connected and $|E(H)|=\frac{r+1}{r}n.$

\begin{lemma}\label[lemma]{p4comp}
Let $H$ be a connected linear $r$-uniform $P_4^r$-free hypergraph such
that $|E(H)|=\frac{(r+1)n}{r}.$
Then $H\cong S(2,r,r^2).$
\end{lemma}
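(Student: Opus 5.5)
We first show that $H$ is $(r+1)$-regular, then that it has exactly $r^2$ vertices, and finally that the pair count forces a Steiner system.

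\textbf{Step 1: $\delta(H)\geq 2$.} Assume $H$ has more than one edge; otherwise $n=r$ and $|E(H)|=1<(r+1)$, which is impossible. By connectivity, $H$ then has no isolated vertices. Suppose some vertex $v$ has $d(v)=1$. Delete $v$ from its edge, or equivalently delete that edge and any vertices left isolated. The resulting hypergraph $H'$ is $P_4^r$-free, has $n'\leq n-1$ vertices and $|E(H')|=|E(H)|-1$. If we delete only $v$ and its edge, we lose one edge and at least one vertex, so
$$
|E(H')|=\frac{(r+1)n}{r}-1>\frac{(r+1)(n-1)}{r}\geq\frac{(r+1)n'}{r}.
$$
This contradicts the inequality just proved. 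We make this precise as in the proof of \Cref{b4thm}: remove the edge $e\ni v$ together with all vertices of $e$ that become isolated. Since $v$ is among these, at least one vertex is removed, and the computation above goes through. Hence $\delta(H)\geq 2$.

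\textbf{Step 2: $\Delta(H)=r+1$.} Let $k=\Delta(H)$. The average degree equals $r|E(H)|/n=r+1$, so $k\geq r+1$. Suppose $k\geq r+2$. \Cref{maxtrace} gives $k\leq 2r-1$. We then repeat the argument from the proof of the inequality. From $n\geq 1+k(r-1)$ we get
$$
m=\frac{(r+1)n}{r}\geq rk-\frac{k-r-1}{r}>rk-1,
$$
so $m\geq rk$. Combined with \Cref{joinineq} this gives $m=rk$, hence $H$ is $k$-regular and $n=r^2$. But then $n\geq 1+(r+2)(r-1)>r^2$, a contradiction. Therefore $\Delta(H)=r+1$ equals the average degree, so $H$ is $(r+1)$-regular. (For $r=2$ the range $r+2\leq k\leq 2r-1$ is empty, so $k\geq r+2$ is excluded at once.)

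\textbf{Step 3: $n=r^2$ and $H$ is a Steiner system.} Now $k=r+1$ and $m=(r+1)n/r$. \Cref{joinineq} gives $m\leq r(r+1)$, hence $n\leq r^2$. On the other hand, the star at any vertex spans $1+(r+1)(r-1)=r^2$ vertices, so $n\geq r^2$. Thus $n=r^2$ and $m=r(r+1)$. Linearity gives $m\binom{r}{2}\leq\binom{r^2}{2}$, and with these values equality holds. Hence every pair of vertices lies in exactly one edge, and $H\cong S(2,r,r^2)$.

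The only delicate point is Step 1, namely checking that a minimal-degree deletion keeps the inequality strict. The rest is a direct reuse of \Cref{maxtrace} and the equality case of \Cref{joinineq}.
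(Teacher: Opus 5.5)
Your proof is correct and follows the paper's argument essentially step for step. You rule out degree-one vertices by deleting an edge together with its newly isolated vertices and invoking the already-proved bound, exclude $\Delta(H)\geq r+2$ via \Cref{maxtrace} and the equality case of \Cref{joinineq}, and then force $n=r^2$ and the Steiner property by the star count and the pair count. The only blemish is the phrase ``delete $v$ from its edge, or equivalently\ldots'', which is not literally an equivalent operation, but your subsequent precise version (remove the edge together with the vertices it leaves isolated) is exactly what the paper does.
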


\begin{proof}

Since $H$ is connected it does not contain isolated vertices. Suppose it contains degree $1$ vertices. If a hyperedge contained $q\geq 1$
vertices of degree one, deleting that hyperedge and these vertices and
then applying the already proved upper bound would give
$$
|E(H)|-1\leq\frac{r+1}{r}(n-q),
$$
which results $|E(H)| < \dfrac{(r+1)n}{r}$, a contradiction.

The average degree of $H$ is $
\dfrac{r|E(H)|}{n}=r+1.$ Let $k = \Delta(H)$.
We first prove that $k\leq r+1$. Suppose, for a contradiction, that
$k\geq r+2$. If $r=2$, then \Cref{maxtrace} gives
$k\leq 3$, an immediate contradiction. Thus, assume $r\geq 3$. By
\Cref{maxtrace}, $k\leq 2r-1$. The maximum-star count
gives
$$
|E(H)|
=\frac{r+1}{r}n
\geq
\frac{r+1}{r}\bigl(1+k(r-1)\bigr)
=
rk-\frac{k-r-1}{r}
>
rk-1.
$$
Therefore, $|E(H)|\geq rk$. By \Cref{joinineq}, $|E(H)|\leq rk$, and
hence $|E(H)|=rk$. Equality in \Cref{joinineq} shows that $H$ is
$k$-regular. Thus, $
kn=r|E(H)|=r^2k,$ so $n=r^2$. But the maximum star contains at least $1+(r+2)(r-1)>r^2$ vertices, a contradiction. Hence, $k\leq r+1$.

Since the average degree is $r+1$ and no vertex has degree greater than
$r+1$, every vertex has degree exactly $r+1$. Applying
\Cref{joinineq}, we get $
|E(H)|\leq r(r+1).$
Since $|E(H)|=\dfrac{(r+1)n}{r}$, it follows that $n\leq r^2.$
On the other hand, the $r+1$ hyperedges through any fixed vertex have
pairwise intersection exactly at that vertex, and therefore their union
has size $1+(r+1)(r-1)=r^2.$
Thus, $n\geq r^2$. Therefore,
$$
n=r^2
\qquad\text{and}\qquad
m=r(r+1).
$$

Every hyperedge contains $\binom{r}{2}$ unordered pairs of vertices,
and linearity guarantees that no pair of vertices is contained in two
distinct hyperedges. Moreover,
$$
|E(H)|\binom{r}{2}
=
r(r+1)\binom{r}{2}
=
\binom{r^2}{2}.
$$
Therefore, every pair of vertices is contained in exactly one
hyperedge. Hence, $H\cong S(2,r,r^2)$.
\end{proof}
Thus every connected component of $H$ is isomorphic to
$S(2,r,r^2)$, and hence $H$ is a disjoint union of such systems.

\noindent\textbf{$\bullet$ If Part}

\noindent Now suppose that $H$ is a disjoint union of copies of
$S(2,r,r^2)$. We show that $H$ is $P_4^r$-free and attains the bound. 
We first verify that these Steiner systems are indeed $P_4^r$-free.

\begin{lemma}\label[lemma]{p4steiner}
Every Steiner system $S(2,r,r^2)$ is $P_4^r$-free and has
$r(r+1)$ hyperedges. Therefore, any hypergraph that is disjoint union of $S(2,r,r^2)$ is $P_4^r$-free and has $\dfrac{(r+1)n}{r}$ hyperedges, where $n$ is the number of vertices.
\end{lemma}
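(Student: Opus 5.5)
The edge count is the easy part. By \Cref{steinercount}, with $n=r^2$, an $S(2,r,r^2)$ has $r^2(r^2-1)/(r(r-1))=r(r+1)$ hyperedges. A disjoint union of $t$ copies therefore has $n=tr^2$ vertices and $tr(r+1)=(r+1)n/r$ hyperedges.

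Freeness for the union also reduces at once to a single copy. A copy of $P_4^r$ is connected, so it must lie inside one component.

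So the real task is to show that a single $C\cong S(2,r,r^2)$ contains no $P_4^r$. The tool will be \Cref{lem:steiner}, which is a parallel axiom: for a hyperedge $e$ and a vertex $v\notin e$, exactly one hyperedge through $v$ misses $e$. First I would show that disjointness is an equivalence relation on $E(C)$, where each hyperedge counts as "parallel" to itself. Suppose $f\cap e=\emptyset$, $g\cap e=\emptyset$, $f\neq g$, and $f\cap g\ni v$. Then $v\notin e$, and $f,g$ are two distinct hyperedges through $v$ both missing $e$. This contradicts the uniqueness in \Cref{lem:steiner}. Hence two hyperedges that are both disjoint from a third hyperedge are disjoint from (or equal to) each other.

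Now suppose $f_1,f_2,f_3,f_4$ form a $P_4^r$ in $C$. Then $f_1\cap f_3=\emptyset$ and $f_1\cap f_4=\emptyset$, while $f_3\cap f_4\neq\emptyset$ and $f_3\neq f_4$. This directly contradicts the transitivity just established: $f_3$ and $f_4$ both miss $f_1$, so they should be disjoint. In effect, $f_1$ and the intersection point of $f_3$ and $f_4$ violate \Cref{lem:steiner}. Note that only $f_1,f_3,f_4$ are used, so the argument actually excludes even the configuration $P_2^r$ plus a disjoint edge with the pattern above. In the same way one could verify the freeness via \Cref{lem:path-line-graph}: the line graph of $C$ is complete multipartite, with parts being the parallel classes, and such a graph is a cograph.

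I expect no real obstacle. The only point requiring care is checking that $v\notin e$ before invoking \Cref{lem:steiner}, and this is immediate since $v\in f$ and $f\cap e=\emptyset$. Combining the single-component freeness with the edge count gives the lemma.
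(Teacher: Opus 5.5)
Your proof is correct and is essentially the paper's argument: the paper also takes the common vertex $v$ of the third and fourth hyperedges. It notes that $v$ avoids the first hyperedge, so it obtains two hyperedges through $v$ that both miss the first one, contradicting \Cref{lem:steiner}; the union case then follows from the additive edge count and the fact that a copy of $P_4^r$ lies in a single component. Your transitivity-of-disjointness framing and the complete-multipartite line-graph remark are extra packaging around this same single step.
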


\begin{proof}
Let $C\cong S(2,r,r^2)$. By \Cref{steinercount},
$|E(C)|=r(r+1).$
Suppose, for a contradiction, that $C$ contains a copy of $P_4^r$ with
hyperedges $e_1,e_2,e_3,e_4$ in this order. Let $v=e_3\cap e_4.$

Then $v\notin e_1$, and both $e_3$ and $e_4$ are hyperedges containing
$v$ and disjoint from $e_1$. This contradicts \Cref{lem:steiner}, which
states that there is a unique hyperedge containing $v$ and disjoint
from $e_1$. Hence, $C$ is $P_4^r$-free.

 Clearly, disjoint unions of copies of $C$ remain $P_4^r$-free. The edge
count is additive.
If there are $t$ disjoint copies of $C$, then $n = tr^2$, and $|E(H)| = tr(r+1)$, Therefore,
$$|E(H)| = \frac{n}{r^2}r(r+1) = \frac{n(r+1)}{r}.$$
\end{proof}

\section{Counterexamples to a structural claim in
\cite{zhang2025linear}}\label{counterex}

Let $H$ be a connected linear $4$-uniform $P_4^4$-free hypergraph with
$\delta(H)\geq 2$, let $v$ be a vertex of maximum degree, and let $S$
be the star formed by the hyperedges containing $v$. In the proposed
proof of the $4$-uniform case in~\cite{zhang2025linear}, it was claimed
that $V(S)=V(H)$, with separate arguments for $\Delta(H)=6$ and
$\Delta(H)\geq 7$. The following two hypergraphs show that this
structural assertion is false. The proof of \Cref{P4thm} is independent
of this assertion.

First we provide a counterexample for the $\Delta(H) = 6$ case.
\begin{center}
\begin{minipage}{\textwidth}
\centering
\begin{tikzpicture}[
    x=1cm,
    y=1cm,
    scale=0.95,
    star/.style={
        draw=blue!75!black,
        line width=0.9pt,
        line cap=round,
        line join=round
    },
    rightfamily/.style={
        draw=red!75!black,
        line width=0.9pt,
        line cap=round,
        line join=round
    },
    leftfamily/.style={
        draw=teal!70!black,
        line width=0.9pt,
        line cap=round,
        line join=round
    },
    vertex/.style={
        circle,
        fill=black,
        inner sep=1.45pt
    },
    vertexlabel/.style={
        font=\scriptsize,
        fill=white,
        inner sep=0.75pt,
        text=black
    }
]


\coordinate (v) at (0,0);

\coordinate (v11) at (0.65,1.13);
\coordinate (v12) at (1.30,2.25);
\coordinate (v13) at (1.95,3.38);

\coordinate (v21) at (1.30,0);
\coordinate (v22) at (2.60,0);
\coordinate (v23) at (3.90,0);

\coordinate (v31) at (0.65,-1.13);
\coordinate (v32) at (1.30,-2.25);
\coordinate (v33) at (1.95,-3.38);

\coordinate (v41) at (-0.65,-1.13);
\coordinate (v42) at (-1.30,-2.25);
\coordinate (v43) at (-1.95,-3.38);

\coordinate (v51) at (-1.30,0);
\coordinate (v52) at (-2.60,0);
\coordinate (v53) at (-3.90,0);

\coordinate (v61) at (-0.65,1.13);
\coordinate (v62) at (-1.30,2.25);
\coordinate (v63) at (-1.95,3.38);

\coordinate (xh) at (0,4.70);


\draw[star] (v)--(v11)--(v12)--(v13);
\draw[star] (v)--(v21)--(v22)--(v23);
\draw[star] (v)--(v31)--(v32)--(v33);
\draw[star] (v)--(v41)--(v42)--(v43);
\draw[star] (v)--(v51)--(v52)--(v53);
\draw[star] (v)--(v61)--(v62)--(v63);


\draw[rightfamily]
    plot[smooth,tension=0.70]
    coordinates {(xh) (v11) (v21) (v31)};

\draw[rightfamily]
    plot[smooth,tension=0.70]
    coordinates {(xh) (v12) (v22) (v32)};

\draw[rightfamily]
    plot[smooth,tension=0.70]
    coordinates {(xh) (v13) (v23) (v33)};


\draw[leftfamily]
    plot[smooth,tension=0.70]
    coordinates {(xh) (v61) (v51) (v41)};

\draw[leftfamily]
    plot[smooth,tension=0.70]
    coordinates {(xh) (v62) (v52) (v42)};

\draw[leftfamily]
    plot[smooth,tension=0.70]
    coordinates {(xh) (v63) (v53) (v43)};


\foreach \p in {
    v,
    v11,v12,v13,
    v21,v22,v23,
    v31,v32,v33,
    v41,v42,v43,
    v51,v52,v53,
    v61,v62,v63,
    xh%
}{
    \node[vertex] at (\p) {};
}


\node[vertexlabel,above=3pt]       at (v)   {$v$};
\node[vertexlabel,above=3pt]       at (xh)  {$x$};

\node[vertexlabel,right=2pt]       at (v11) {$v_{11}$};
\node[vertexlabel,right=2pt]       at (v12) {$v_{12}$};
\node[vertexlabel,above right=2pt] at (v13) {$v_{13}$};

\node[vertexlabel,above right=2pt] at (v21) {$v_{21}$};
\node[vertexlabel,above right=2pt] at (v22) {$v_{22}$};
\node[vertexlabel,right=3pt]       at (v23) {$v_{23}$};

\node[vertexlabel,below left=2pt]  at (v31) {$v_{31}$};
\node[vertexlabel,below left=2pt]  at (v32) {$v_{32}$};
\node[vertexlabel,below=2pt]       at (v33) {$v_{33}$};

\node[vertexlabel,below right=2pt] at (v41) {$v_{41}$};
\node[vertexlabel,below right=2pt] at (v42) {$v_{42}$};
\node[vertexlabel,below=2pt]       at (v43) {$v_{43}$};

\node[vertexlabel,above right=2pt] at (v51) {$v_{51}$};
\node[vertexlabel,above right=2pt] at (v52) {$v_{52}$};
\node[vertexlabel,left=3pt]        at (v53) {$v_{53}$};

\node[vertexlabel,right=2pt]       at (v61) {$v_{61}$};
\node[vertexlabel,right=2pt]       at (v62) {$v_{62}$};
\node[vertexlabel,above left=2pt]  at (v63) {$v_{63}$};

\end{tikzpicture}

\captionof{figure}{A connected linear $4$-uniform $P_4^4$-free hypergraph with $\Delta(H)=6$ and $\delta(H)\geq 2$}
\label{ce2}
\end{minipage}
\end{center}

$H$ (in \Cref{ce2}) is a connected $4$-uniform linear $P_4^4$-free hypergraph with $\Delta(H) =6$. Let $v \in V(H)$ such that $d(v) = 6$ and $\delta(H) \geq 2$. Let $S$ be the star centered at $v$. We have the following three types of hyperedges;
$$
\begin{aligned}
E(H)={}&
\bigl\{\{v,v_{i1},v_{i2},v_{i3}\}: i\in[6]\bigr\}\\
&\cup
\bigl\{\{x,v_{1i},v_{2i},v_{3i}\}: i\in[3]\bigr\}\\
&\cup
\bigl\{\{x,v_{4i},v_{5i},v_{6i}\}: i\in[3]\bigr\}.
\end{aligned}
$$

Note that $V(H)\setminus V(S) = \{x\}$. Thus, $V(S) \neq V(H)$, contradicting the claim in \cite{zhang2025linear}. Now we provide a counterexample for the $\Delta(H) \geq 7$ case, specifically for $\Delta(H) = 7$.  
\begin{center}
\begin{minipage}{\textwidth}
\centering
\begin{tikzpicture}[
    x=1cm,
    y=1cm,
    scale=0.95,
    star/.style={
        draw=blue!75!black,
        line width=0.9pt,
        line cap=round,
        line join=round
    },
    rightfamily/.style={
        draw=red!75!black,
        line width=0.9pt,
        line cap=round,
        line join=round
    },
    leftfamily/.style={
        draw=teal!70!black,
        line width=0.9pt,
        line cap=round,
        line join=round
    },
    rowfamily/.style={
        draw=orange!85!black,
        line width=0.9pt,
        line cap=round,
        line join=round
    },
    columnfamily/.style={
        draw=violet!80!black,
        line width=0.9pt,
        line cap=round,
        line join=round
    },
    vertex/.style={
        circle,
        fill=black,
        inner sep=1.45pt
    },
    vertexlabel/.style={
        font=\scriptsize,
        fill=white,
        inner sep=0.75pt,
        text=black
    },
    edgelabel/.style={
        font=\small,
        fill=white,
        inner sep=0.7pt,
        text=blue!75!black
    }
]


\coordinate (v)   at (0,-0.35);

\coordinate (v11) at (0,0.85);
\coordinate (v12) at (0,2.05);
\coordinate (v13) at (0,3.25);

\coordinate (v21) at (1.12,0.32);
\coordinate (v22) at (2.25,0.95);
\coordinate (v23) at (3.42,1.60);

\coordinate (v31) at (1.08,-0.92);
\coordinate (v32) at (2.23,-1.48);
\coordinate (v33) at (3.40,-2.08);

\coordinate (v41) at (0.64,-1.48);
\coordinate (v42) at (1.20,-2.65);
\coordinate (v43) at (1.76,-3.84);

\coordinate (v51) at (-0.64,-1.48);
\coordinate (v52) at (-1.20,-2.65);
\coordinate (v53) at (-1.76,-3.84);

\coordinate (v61) at (-1.08,-0.92);
\coordinate (v62) at (-2.23,-1.48);
\coordinate (v63) at (-3.40,-2.08);

\coordinate (v71) at (-1.12,0.32);
\coordinate (v72) at (-2.25,0.95);
\coordinate (v73) at (-3.42,1.60);


\coordinate (x11) at (-4.95,6.35);
\coordinate (x12) at (-3.58,6.35);
\coordinate (x13) at (-2.21,6.35);

\coordinate (x21) at (-4.95,5.18);
\coordinate (x22) at (-3.58,5.18);
\coordinate (x23) at (-2.21,5.18);

\coordinate (x31) at (-4.95,4.01);
\coordinate (x32) at (-3.58,4.01);
\coordinate (x33) at (-2.21,4.01);


\draw[star] (v)--(v11)--(v12)--(v13);
\draw[star] (v)--(v21)--(v22)--(v23);
\draw[star] (v)--(v31)--(v32)--(v33);
\draw[star] (v)--(v41)--(v42)--(v43);
\draw[star] (v)--(v51)--(v52)--(v53);
\draw[star] (v)--(v61)--(v62)--(v63);
\draw[star] (v)--(v71)--(v72)--(v73);


\draw[rightfamily]
    plot[smooth,tension=0.70]
    coordinates {(v11) (v21) (v31) (v41)};

\draw[rightfamily]
    plot[smooth,tension=0.70]
    coordinates {(v11) (v22) (v32) (v42)};

\draw[rightfamily]
    plot[smooth,tension=0.70]
    coordinates {(v11) (v23) (v33) (v43)};


\draw[leftfamily]
    plot[smooth,tension=0.70]
    coordinates {(v11) (v71) (v61) (v51)};

\draw[leftfamily]
    plot[smooth,tension=0.70]
    coordinates {(v11) (v72) (v62) (v52)};

\draw[leftfamily]
    plot[smooth,tension=0.70]
    coordinates {(v11) (v73) (v63) (v53)};

%

\draw[rowfamily]
    (x11)--(x12)--(x13)
    .. controls (-1.00,6.35) and (0.95,4.50) ..
       (0.75,2.80)
    .. controls (0.62,2.35) and (0.30,2.08) ..
       (v12);

\draw[rowfamily]
    (x21)--(x22)--(x23)
    .. controls (-1.00,5.18) and (0.70,4.05) ..
       (0.62,2.72)
    .. controls (0.52,2.32) and (0.25,2.10) ..
       (v12);

\draw[rowfamily]
    (x31)--(x32)--(x33)
    .. controls (-1.05,4.01) and (0.48,3.40) ..
       (0.48,2.62)
    .. controls (0.40,2.28) and (0.20,2.12) ..
       (v12);

%

\draw[columnfamily]
    (x11)--(x21)--(x31)
    .. controls (-4.95,2.65) and (-2.65,2.35) ..
       (-1.10,2.55)
    .. controls (-0.60,2.62) and (-0.28,2.94) ..
       (v13);

\draw[columnfamily]
    (x12)--(x22)--(x32)
    .. controls (-3.58,2.85) and (-2.10,2.62) ..
       (-0.92,2.78)
    .. controls (-0.48,2.84) and (-0.22,3.05) ..
       (v13);

\draw[columnfamily]
    (x13)--(x23)--(x33)
    .. controls (-2.21,3.10) and (-1.55,2.98) ..
       (-0.72,3.02)
    .. controls (-0.38,3.04) and (-0.18,3.13) ..
       (v13);


\foreach \p in {
    v,
    v11,v12,v13,
    v21,v22,v23,
    v31,v32,v33,
    v41,v42,v43,
    v51,v52,v53,
    v61,v62,v63,
    v71,v72,v73,
    x11,x12,x13,
    x21,x22,x23,
    x31,x32,x33%
}{
    \node[vertex] at (\p) {};
}


\node[vertexlabel,below=3pt] at (v) {$v$};

\node[vertexlabel,above right=3pt] at (v11) {$v_{11}$};
\node[vertexlabel,right=5pt]       at (v12) {$v_{12}$};
\node[vertexlabel,right=5pt]       at (v13) {$v_{13}$};

\node[vertexlabel,above=2pt]       at (v21) {$v_{21}$};
\node[vertexlabel,above=2pt]       at (v22) {$v_{22}$};
\node[vertexlabel,above right=2pt] at (v23) {$v_{23}$};

\node[vertexlabel,above right=2pt] at (v31) {$v_{31}$};
\node[vertexlabel,above right=2pt] at (v32) {$v_{32}$};
\node[vertexlabel,below right=2pt] at (v33) {$v_{33}$};

\node[vertexlabel,left=3pt]        at (v41) {$v_{41}$};
\node[vertexlabel,left=3pt]        at (v42) {$v_{42}$};
\node[vertexlabel,below right=2pt] at (v43) {$v_{43}$};

\node[vertexlabel,above=2pt]      at (v71) {$v_{71}$};
\node[vertexlabel,above=2pt]      at (v72) {$v_{72}$};
\node[vertexlabel,above left=2pt] at (v73) {$v_{73}$};

\node[vertexlabel,above left=2pt] at (v61) {$v_{61}$};
\node[vertexlabel,above left=2pt] at (v62) {$v_{62}$};
\node[vertexlabel,below left=2pt] at (v63) {$v_{63}$};

\node[vertexlabel,right=3pt]      at (v51) {$v_{51}$};
\node[vertexlabel,right=3pt]      at (v52) {$v_{52}$};
\node[vertexlabel,below left=2pt] at (v53) {$v_{53}$};

\node[vertexlabel,above left=2pt]  at (x11) {$x_{11}$};
\node[vertexlabel,above=2pt]       at (x12) {$x_{12}$};
\node[vertexlabel,above right=2pt] at (x13) {$x_{13}$};

\node[vertexlabel,left=3pt]        at (x21) {$x_{21}$};
\node[vertexlabel,above right=2pt] at (x22) {$x_{22}$};
\node[vertexlabel,above right=3pt] at (x23) {$x_{23}$};

\node[vertexlabel,below left=2pt]  at (x31) {$x_{31}$};
\node[vertexlabel,below left=2pt]  at (x32) {$x_{32}$};
\node[vertexlabel,below right=2pt] at (x33) {$x_{33}$};

\end{tikzpicture}

\captionof{figure}{
A connected linear $4$-uniform $P_4^4$-free hypergraph with
$\Delta(H)=7$, $\delta(H)\geq 2$}
\label{ce1}
\end{minipage}
\end{center}

  $H$ (in \Cref{ce1}) is a connected $4$-uniform linear $P_4^4$-free hypergraph with $\Delta(H) =7$. Let $v \in V(H)$ such that $d(v) = 7$ and $\delta(H) \geq 2$. Let $S$ be the star centered at $v$. We have the following five types of hyperedges;
  $$
\begin{aligned}
E(H)= {}&
\bigl\{\{v,v_{i1},v_{i2},v_{i3}\}: i\in[7]\bigr\}\\
&\cup
\bigl\{\{v_{11},v_{2i},v_{3i},v_{4i}\}: i\in[3]\bigr\}\\
&\cup
\bigl\{\{v_{11},v_{5i},v_{6i},v_{7i}\}: i\in[3]\bigr\}\\
&\cup
\bigl\{\{x_{i1},x_{i2},x_{i3},v_{12}\}: i\in[3]\bigr\}\\
&\cup
\bigl\{\{x_{1i},x_{2i},x_{3i},v_{13}\}: i\in[3]\bigr\}.
\end{aligned}
$$

Note that $V(H)\setminus V(S) = \{x_{ij} \mid i,j \in [3]\}$. Thus, $V(S) \neq V(H)$, contradicting the claim in \cite{zhang2025linear}.

\section*{Conclusion and Future Directions}

In this paper, we studied linear Tur\'an numbers of \(r\)-uniform linear
hypertrees. We gave a general lower-bound construction, determined the
extremal behaviour of \(B_4^r\), and obtained a degree-sensitive upper
bound and a lower bound for the crown \(E_4^r\). For the linear path
\(P_4^r\), we proved the sharp universal bound for every \(r\geq 2\), and characterized equality by disjoint unions of
Steiner systems \(S(2,r,r^2)\).

A natural direction for future work is to close the constant-factor gap
between the bounds in \Cref{E4thm,E4lower} and to characterize the
extremal \(E_4^r\)-free hypergraphs. For \(P_4^r\), \Cref{P4thm}
determines the sharp universal coefficient and all equality cases. If
\(r^2\nmid n\), or if no Steiner system \(S(2,r,r^2)\) exists, equality
in the real-valued bound is impossible. Determining the exact integer
value of \(\operatorname{ex}^{\mathrm{lin}}_r(n,P_4^r)\) for all such
remaining pairs \((n,r)\) is a finer problem.

More broadly, obtaining sharp bounds for longer linear paths \(P_k^r\)
is an interesting problem. The current general upper bound from Zhou
and Yuan~\cite{zhou2025turan} is likely far from optimal, especially
when compared with the sharp classical result for paths in graphs
\cite{gallai1959maximal}. The four-edge path is particularly amenable
to the line-graph approach because its exclusion is equivalent to a
cograph condition. It would be interesting to determine whether
related structural ideas can be used for longer paths.

\section*{Declaration of competing interest}

The authors declare that they have no known competing financial
interests or personal relationships that could have appeared to
influence the work reported in this paper.

\section*{Declaration of Generative AI and AI-Assisted Technologies}

During the preparation of this work, the authors used ChatGPT (OpenAI) as an auxiliary research tool to assist with computations related to the proof of \Cref{joinineq} and with the construction and verification of the counterexamples in \Cref{counterex}. All AI-assisted outputs were independently checked and revised by the authors, who take full responsibility for the correctness and content of the manuscript.

\bibliographystyle{plain}
\bibliography{references}

\end{document}